\newcommand{\bburl}[1]{\textcolor{blue}{\url{#1}}}
\numberwithin{equation}{section}
\newcommand\be{\begin{equation}}
\newcommand\ee{\end{equation}}
\newcommand\bea{\begin{eqnarray}}
\newcommand\eea{\end{eqnarray}}
\newcommand\bi{\begin{itemize}}
\newcommand\ei{\end{itemize}}
\newcommand\ben{\begin{enumerate}}
\newcommand\een{\end{enumerate}}
\newtheorem{thm}{Theorem}[section]
\newtheorem{conj}[thm]{Conjecture}
\newtheorem{cor}[thm]{Corollary}
\newtheorem{lem}[thm]{Lemma}
\newtheorem{prop}[thm]{Proposition}
\theoremstyle{definition}
\newtheorem{defi}[thm]{Definition}
\newtheorem{que}[thm]{Question}
\theoremstyle{remark}
\newtheorem{rek}[thm]{Remark}
\newtheorem{cla}[thm]{Claim}
\newtheorem{calc}[thm]{Calculation}
\newcommand{\su}{\sum}
\newcommand{\sui}[1]{\su_{i=#1}^\infty}
\newcommand{\pnhb}[1]{P_{#1}^\text{H}}
\newcommand{\pnhz}{\pnhb{n}}
\newcommand{\pnh}[1]{\pnhz(#1)}
\newcommand{\pnmb}[2][P]{#1_{#2}^\text{M}}
\newcommand{\pnmz}{\pnmb{n}}
\newcommand{\pnm}[1]{\pnmz(#1)}
\newcommand{\qnmb}[1]{\pnmb[Q]{#1}}
\newcommand{\tand}{{\rm\ and\ }}
\newcommand{\tor}{{\rm \ or\ }}
\newcommand{\ifff}{\Longleftrightarrow}
\newcommand{\lin}{\lim\limits_{n\to\infty}}
\newcommand{\novt}{\frac{n}{2}}
\newcommand{\oovt}{\frac{1}{2}}
\newcommand{\oovf}{\frac{1}{4}}
\newcommand{\tovf}{\frac{3}{4}}
\newcommand{\fovn}{\frac{4}{9}}
\newcommand{\tvfp}{\left(\frac{3}{4}\right)}
\newcommand{\cond}{\text{ \big | }}
\newcommand{\cht}[1][0,n-1\in]{\cond #1 S}
\newcommand{\misk}{|S-S|=2n-1-k}
\newcommand{\kitn}{$k\in 2\mathbb{N}$}
\newcommand{\eq}[1]{\begin{align}#1\end{align}}
\newcommand{\smeq}[1]{{\small\eq{#1}}}
\newcommand{\eqn}[1]{\begin{align*}#1\end{align*}}
\newcommand{\twocase}[5]{#1 \begin{cases} #2 & #3\\ #4
& #5 \end{cases}   }
\newcommand{\threecase}[7]{#1 \begin{cases} #2 &
#3\\ #4 & #5\\ #6 & #7 \end{cases}   }
\begin{document}

\title{Distribution of missing differences in diffsets}

\author{Scott Harvey-Arnold} \address[Scott Harvey-Arnold]{Carnegie Mellon University, Pittsburgh, PA 15213}\email{sharveyarnold@gmail.com}
\author{Steven J. Miller} \address[Steven J. Miller]{Carnegie Mellon University, Pittsburgh, PA 15213} \curraddr{Department of Mathematics and Statistics, Williams College, Williamstown, MA 01267}\email{sjm1@williams.edu}
\author{Fei Peng} \address[Fei Peng]{Carnegie Mellon University, Pittsburgh, PA 15213}\email{fpeng1@andrew.cmu.edu}

\date{\today}

\begin{abstract} Lazarev, Miller and O'Bryant \cite{LMO} investigated the distribution of $|S+S|$ for $S$ chosen uniformly at random from $\{0, 1, \dots, n-1\}$, and proved the existence of a divot at missing 7 sums (the probability of missing exactly 7 sums is less than missing 6 or missing 8 sums). We study related questions for $|S-S|$, and shows some divots from one end of the probability distribution, $P(|S-S|=k)$, as well as a peak at $k=4$ from the other end, $P(2n-1-|S-S|=k)$. A corollary of our results is an asymptotic bound for the number of complete rulers of length $n$.
\end{abstract}

\subjclass[2000]{11P99 (primary), 11K99 (secondary).}

\keywords{}

\thanks{This work was partially supported by NSF grant DMS1561945, Carnegie Mellon University, and Williams College. We thank Joshua Siktar for the constructive comments, and Carnegie Mellon University for the research opportunity and the AFS computing platform.}

\maketitle


\section{Introduction}

\subsection{Background}
Let $S$ be a typical subset of \be [n]\ := \ \{0, 1, \dots, n-1\};\ee in other words, we choose $S$ uniformly at random, or equivalently each integer in $[n]$ is independently chosen to be in $S$ with probability $1/2$. Define \eq{S+S \ := \ \{x+y:x,y\in S\}\tand S-S:=\{x-y:x,y\in S\}.} We refer to these as the \textit{sumset} and the \textit{diffset} of $S$, and we denote the cardinality of a set $A$ by $|A|$.

The sizes of the sumset and the diffset have been compared extensively. As addition is commutative and subtraction is not, it was conjectured that as $n\to\infty$ almost all sets $S$ should be difference dominated: $|S-S| > |S+S|$. Thus while sum-dominant sets were known to exist, and constructions for infinite families were given, they were thought to be rare. This conjecture turns out to be false; Martin and O'Bryant \cite{MO} proved that for a small but positive proportion of all subsets of $[n]$, the sumset has a larger cardinality than the diffset. This result holds if instead of choosing each element with probability 1/2 we instead choose with a fixed probability $p > 0$; however, if $p$ is allowed to decay to zero with $n$ then Hegarty and Miller \cite{HM} proved almost all sets are difference dominated. For these and related results see \cite{AMMS, BELM, CLMS, CMMXZ, DKMMW, DKMMWW, He, HLM, ILMZ, Ma, MOS, MP, MS, MV, MXZ,  Na1, Na2, Ru1, Ru2, Ru3, Zh1, Zh2}.


The distribution of $|S+S|$ has also been studied. When $S$ is chosen uniformly at randomly from $[n]$, Lazarev, Miller and O'Bryant \cite{LMO} proved an unusual ``divot'' occurs in the limiting probability distribution of $|S+S|$ (the existence of the limiting distribution was shown by Zhao \cite{Zh2}). In particular, the limiting probability of missing 7 sums is less than that of missing 6 (or 8): {\footnotesize\eq{ \lin P(2n-1-|S+S|=7)<\lin P(2n-1-|S+S|=6)<\lin P(2n-1-|S+S|=8).}} Further, \cite{LMO} gave rigorous bounds for $\lin P(2n-1-|S+S|=k)$ for $0\le k<32$, which imply that there are no more divots until $k=27$. It is unknown whether there could be more divots later.  Figure 1 of their paper is reproduced here with permission as Figure \ref{figure:x_k Experimental graph}.

\begin{figure}[h]
\begin{center}
\includegraphics[scale=.80]{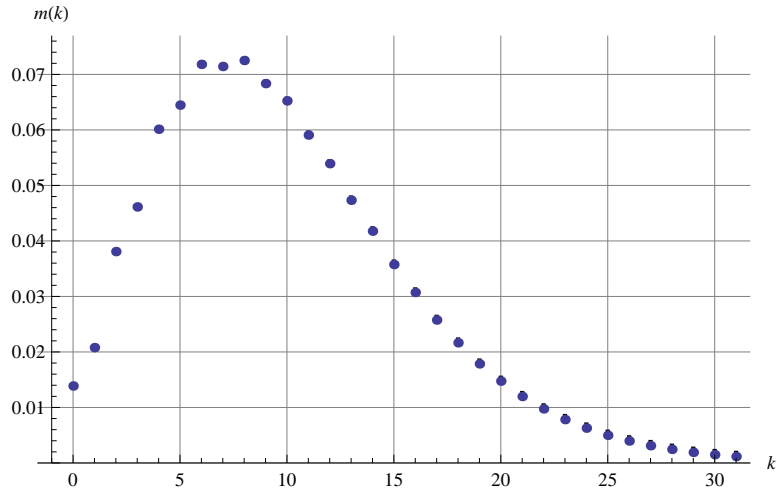}
\end{center}
\caption{Experimental values of $m(k)$, the probability $S+S$ is missing exactly $k$ sum, when each $m \in [n]$ is in $S$ with probability $1/2$. The vertical bars depict the values allowed by the most rigorous bounds in \cite{LMO}. In most cases, the allowed interval is smaller than the dot indicating the experimental value. The data comes from generating $2^{28}$ sets uniformly forced to contain 0 from $[0, 256)$.\label{figure:x_k Experimental graph}}
\end{figure}

However, the probability distribution of $|S-S|$, the size of the diffset, has not been extensively investigated. One reason for the success in $|S+S|$ and the lack of progress for $|S-S|$ is that the sumset is significantly easier to exhaustively investigate. For many sets, their properties can be determined by decomposing $S$ as $L \cup M \cup R$, where $L$ and $R$ are respectively the left and right fringe elements and $M$ is the middle; typically $L$ and $R$ are of bounded size independent of $n$, so most elements in $S$ are in $M$. As there are many ways to write a number as a sum or difference of elements, most elements in $[n]+[n]$ or $[n]-[n]$ are realized, especially since a typical $S$ has on the order of $n/2$ elements and thus generates on the order of $n^2/2$ pairs. The difference is for the fringe elements, where there are fewer representations and thus a greater chance of an element not being obtained.\footnote{An integer $m \le n$ can be written as $m+1$ sums of pairs of elements from $[n]$, and if $m$ is modest it is thus unlikely that none of these pairs have both elements in $S$; however, if $m$ is small then an element can have a significant probability of not occurring. For example, if $0 \in S$ but $1 \not\in S$ then $1 \not\in S+S$.} For sumsets the left and right fringes do not interact, with the left fringe $L + L$ and the right $R+R$; this is not the case for the diffset, where the fringes are $L-R$ and its negative $R-L$. As a result, to determine whether an extremal element is in $S+S$, only one fringe matters while for $S-S$, both ends must be considered. The computational complexity is hence \emph{squared}, which makes the diffset distribution significantly harder to exhaustively investigate.

Below we focus on the probability distribution of $|S-S|$.

\subsection{Distribution of $|S-S|$ when $n=35$}\

We display the probability distribution when $n=35$ in Figure \ref{fig:nisthrityfiveSminusS}. We exhaustively listed every subset of $[n]$ and recorded the corresponding $|S-S|$. The probability distribution is exactly the frequencies divided by $2^{35}$.

\begin{figure}[h]
\center\includegraphics[width=0.8\textwidth]{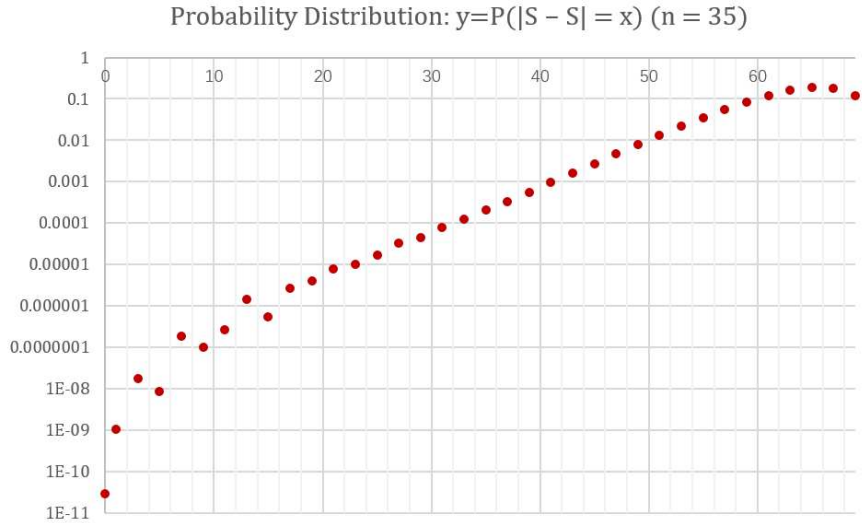}
\caption{\label{fig:nisthrityfiveSminusS} Probability distribution of $P(|S-S|=x)$ when $n=35$.}
\end{figure}

We make three observations from Figure \ref{fig:nisthrityfiveSminusS}.

\begin{itemize}
\item $|S-S|$ is either 0 or odd.
\item There are divots at having 5, 9 and 15 differences. That is,
\eq{
    P(|S-S|=3)&\ >\ P(|S-S|=5) \ <\ P(|S-S|=7),\nonumber\\
    P(|S-S|=7)& \ > \ P(|S-S|=9)\ < \ P(|S-S|=11),\tand\nonumber\\
    P(|S-S|=13)&\ > \ P(|S-S|=15)\ < \ P(|S-S|=17).
}

\item There is a peak at ``missing 4'' differences. That is,
\eq{\forall k\neq 4, P(2n-1-|S-S|=4) \ > \ P(2n-1-|S-S|=k)}
(thus when $n=35$, this is saying $|S-S|=65$ is the most likely cardinality of the diffset).
\end{itemize}

These observations seem to continue to hold for larger $n$, though our investigations are no longer exhaustive but instead are random samples from the space.

The first observation is trivial after realizing that if $m\in S-S$ then $-m\in S-S$.

For conciseness, let \eq{\pnh k \ := \ P(|S-S|=k), \ \ \ \pnm k \ :=\ P(2n-1-|S-S|=k).} Here, H means \textit{having} differences whereas M means \textit{missing}. They are two complementary perspectives.

\subsection{Main results}\

We prove that Observations 2 and 3 are true for sufficiently large $n$.

\begin{thm}\label{thm:propobs2}
Observation 2 is true for $n\ge 12$. That is, $\forall n\ge 12,$ \eq{&\pnh 3\ >\ \pnh 5\ < \ \pnh 7,\nonumber\\&\pnh 7 \ >\ \pnh 9\ <\ \pnh{11},\nonumber\\ \tand& \pnh{13} \ > \ \pnh{15}\ < \ \pnh{17}.} (Note that when $n=11$, Observation 2 fails because $\pnh{13}=269<275=\pnh{15}$.)
\end{thm}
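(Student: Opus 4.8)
The plan is to reduce each of the six inequalities to a statement about the \emph{exact} probabilities $\pnh{k}$ for small $k$, and to understand how these probabilities behave as functions of $n$. The key structural fact is that $|S-S| = k$ (for small odd $k$) forces the diffset to consist of very few differences, which is an extremely restrictive condition: it means $S$ is essentially contained in a small generalized arithmetic progression, or more precisely that $S-S$ is a small symmetric set. So the first step is to classify, for each target value $k \in \{3,5,7,9,11,13,15,17\}$, exactly which sets $S \subseteq [n]$ satisfy $|S-S| = k$. Since $m \in S-S \iff -m \in S-S$ and $0 \in S-S$ always (assuming $S \neq \emptyset$), having $|S-S| = k$ with $k$ odd means $S-S = \{0, \pm d_1, \dots, \pm d_{(k-1)/2}\}$ for some positive differences $d_i$. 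For $|S-S|=3$ this means $S-S = \{0,\pm d\}$, forcing $S$ to be a two-term progression $\{a, a+d\}$; for slightly larger $k$ the admissible configurations grow but remain enumerable.

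The second step is to convert each such classification into a closed-form or piecewise-polynomial count of the number of sets $S \subseteq [n]$ with $|S-S|=k$, as a function of $n$. Because the constraint pins down $S$ up to a bounded set of ``shapes'' (each shape being a difference-pattern that can be translated within $[n]$), I expect each $\pnh{k}$ to equal $2^{-n}$ times a sum of terms of the form $(\text{number of shapes}) \times (\text{number of translates in } [n])$, where the number of translates is linear in $n$ (something like $n - \ell$ for a shape of diameter $\ell$). The upshot is that $2^n \pnh{k}$ should be a polynomial in $n$ — in fact I expect it to be linear, of the form $c_k n + b_k$, for $n$ past some small threshold, with the coefficients $c_k, b_k$ computed explicitly from the enumeration. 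One must be careful about overcounting: a single set $S$ may be counted once, but its diffset shape must be matched to exactly one canonical representative, and boundary effects (shapes that only fit when $n$ is large enough) determine the threshold $n_0$ beyond which the polynomial formula is valid.

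Once each $2^n\pnh{k} = c_k n + b_k$ is in hand, the third step is purely arithmetic: each of the six desired inequalities, e.g. $\pnh{3} > \pnh{5}$, becomes $c_3 n + b_3 > c_5 n + b_5$, a single linear inequality in $n$, which holds for all $n$ at or above an explicit bound. Taking the maximum of these six thresholds (and checking that it is $\le 12$, or verifying the finitely many cases $n < 12$ by hand or by the exact counts) finishes the proof; the parenthetical remark that the inequality $\pnh{13} > \pnh{15}$ fails at $n=11$ with the stated values $269$ versus $275$ is exactly the kind of boundary check that confirms the threshold is sharp and that the linear formulas are being applied correctly.

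The main obstacle I anticipate is the \textbf{classification and exact enumeration} in steps one and two, particularly for the larger values $k = 13, 15, 17$. As $k$ grows, the number of distinct diffset shapes realizing $|S-S| = k$ grows combinatorially, and one must ensure the enumeration is both complete (no shape omitted) and non-redundant (no set double-counted). The delicate point is that $|S-S|=k$ does not simply say $|S|$ is small: a set with many elements can still have a small diffset if it is highly structured (contained in an arithmetic progression), so the enumeration is really over the possible structured configurations rather than over sizes of $S$. I would organize this by first bounding the diameter $\max S - \min S$ in terms of $k$ (a small diffset forces a small diameter), reducing to a finite search, and then either argue the count by a careful case analysis or, given the paper's computational setup, verify the shape enumeration for each fixed $k$ by exhaustive computer search over all sets within the bounded diameter, extracting the linear coefficients $c_k, b_k$ from that finite data.
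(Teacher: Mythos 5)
Your overall strategy---classify the sets with $|S-S|=k$ for each small odd $k$, extract an exact count as a function of $n$, and compare---is the same as the paper's, which characterizes these sets by $|S|$ and their arithmetic-progression structure and derives closed forms such as $2^n\pnh{3}=\binom{n}{2}$ and $2^n\pnh{5}=\binom{\lfloor n/2\rfloor}{2}+\binom{\lfloor (n+1)/2\rfloor}{2}$. However, your reduction contains a genuine error that would derail the execution: a small diffset does \emph{not} force a small diameter. The set $S=\{0,d\}$ has $|S-S|=3$ for every $d\le n-1$, and more generally any $j$-term arithmetic progression with arbitrary common difference $d$ has $|S-S|=2j-1$; the correct statement is that a small diffset forces $S$ to be a sparse subset of structured type, whose ``common difference'' parameters are free and of size up to roughly $n$. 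Consequently your proposed finite search over bounded-diameter configurations misses almost all of the relevant sets, and the claimed form $2^n\pnh{k}=c_kn+b_k$ is wrong: each shape must be classified up to dilation as well as translation, and the resulting counts are quasi-polynomials in $n$ of higher degree (degree $2$ for $k=3$, degree $3$ for $k=7$ where $\binom{n}{3}$ appears, degree $4$ for $k=13$, which is dominated by Sidon sets of size $4$).

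The gap is repairable without changing your architecture: replace ``bounded diameter'' by ``bounded number of elements together with a prescribed pattern of coincidences among the differences,'' note that $|S-S|=k$ forces $\lceil(1+\sqrt{4k-3})/2\rceil\le|S|\le(k+1)/2$ (from $2|S|-1\le|S-S|\le|S|(|S|-1)+1$), and count, for each admissible $|S|=m$ and each coincidence pattern, the configurations landing in $[n]$; each such count is a quasi-polynomial of degree $m$ minus the number of independent linear conditions imposed by the coincidences. The final comparison then pits quasi-polynomials of \emph{different} degrees against one another rather than linear functions with different slopes, so the inequalities follow from identifying leading terms, plus checking the finitely many small $n$ exactly as the failure $\pnhb{11}(13)=269<275=\pnhb{11}(15)$ indicates. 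This is in substance what the paper does, though it records details only for the divot at $5$ and asserts the divots at $9$ and $15$ by ``straightforward analysis.''
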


\begin{thm}\label{thm:obs3}
Observation 3 is true for sufficiently large $n$. That is, \eq{\exists N:\forall n\ge N, \forall k\neq 4:\  \pnm 4\ >\ \pnm k.}
(Note that when $n=14$, Observation 3 fails because $\pnm 4=\pnm 2$. We don't know if this will ever happen again for larger $n$.)
\end{thm}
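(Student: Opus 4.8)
The plan is to pass to the $n\to\infty$ limit, where the number of missing differences is controlled entirely by the two fringes of $S$, to show that the limiting law has a strict maximum at ``missing $4$'', and then to transfer this to large finite $n$ using a tail bound that is uniform in $n$.

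\textbf{Fringe reduction.} A positive difference $d=n-1-j$ is realized exactly when some pair $(b,b+d)$ lies in $S$; the constraints $b\ge 0$ and $b+d\le n-1$ force $b\in\{0,\dots,j\}$, so $d$ depends only on the bottom fringe $A:=\{b\ge 0:b\in S\}$ and the top fringe $B:=\{i\ge 0:n-1-i\in S\}$, and $d$ is present if and only if $j\in A+B$. Since $|S-S|=1+2\cdot\#\{\text{present positive differences}\}$, we get $2n-1-|S-S|=2M$ with $M:=\#\{j\ge 0:j\notin A+B\}$, which in particular recovers $\pnm k=0$ for odd $k$. As $n\to\infty$ the fringes $A,B$ become independent semi-infinite Bernoulli$(1/2)$ sequences on $\z_{\ge 0}$. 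The first fact I would record is the exact identity
\[ P(j\notin A+B)=(3/4)^{j+1}, \]
which holds because on the anti-diagonal $b+b'=j$ the $j+1$ events ``$b\in A$ and $j-b\in B$'' use $2(j+1)$ distinct fair coins and are therefore independent. Borel--Cantelli then gives $M<\infty$ almost surely, and $\sum_{j\ge0}(3/4)^{j+1}=3$ bounds $E[M]$.

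\textbf{Convergence $\pnm{2m}\to p_m:=P(M=m)$.} For a cutoff $T$ I would split the positive differences into the top window ($d>n-T$, i.e.\ $j<T$) and the rest. The window misses depend only on $A\cap[0,T)$ and $B\cap[0,T)$, which are disjoint, hence independent, once $n\ge 2T$, and are distributed as the truncated limiting model. The remaining differences are each missing with probability at most $(3/4)^{(n-d)/2}$ (there are $\ge\lceil(n-d)/2\rceil$ disjoint representing pairs), so their expected total is at most $\sum_{r\ge T}(3/4)^{r/2}\to 0$ as $T\to\infty$, uniformly in $n$. Thus with probability $1-\epsilon_T$ all misses lie in the window, and letting $n\to\infty$ and then $T\to\infty$ yields $\pnm{2m}\to p_m$.

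\textbf{Computing the limit and locating the peak.} I would exploit the decomposition $M=(\min A+\min B)+M'$, where shifting both fringes down so that each contains $0$ makes the residual gap count $M'$ independent of $u:=\min A+\min B$. Here $u$ has the explicit law $P(u=s)=(s+1)2^{-(s+2)}$ (so $E[u]=2$), while $M'$ is the number of gaps of a sumset of two independent random sets each forced to contain $0$. I would then pin down the law of $M'$ by truncation, with rigorous tail control furnished by the $(3/4)^{j+1}$ bound, and convolve with $u$ to obtain the $p_m$ to enough precision to certify $p_2>p_m$ for every $m\neq 2$ --- in particular against the nearest competitors $p_1$ and $p_3$. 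This convolution computation, together with certifying the strict inequality against the closest competitor, is the crux of the argument and the step I expect to be the main obstacle, since the relevant probabilities are close and the comparison must be made rigorous rather than merely numerical.

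\textbf{Assembling the finite-$n$ statement.} For the finitely many even $k\le K$ with $k\neq 4$ (and the odd $k$, where $\pnm k=0$), the convergence of the previous step together with the strict limiting inequality $p_2>p_{k/2}$ gives $\pnm 4>\pnm k$ for all sufficiently large $n$. For $k>K$, the uniform bound $E[M_n]\le 4$ (valid for large $n$) and Markov's inequality give $\pnm k=P(M_n=k/2)\le P(M_n\ge K/2)<p_2/2\le\pnm 4$ for all such $n$, once $K$ is chosen large. Taking the maximum of these finitely many thresholds produces a single $N$, as required by Theorem \ref{thm:obs3}.
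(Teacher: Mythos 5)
Your overall architecture is sound and in fact runs parallel to the paper's: your reduction to the two fringes $A,B$ and the decomposition $M=u+M'$ with $P(u=s)=(s+1)2^{-(s+2)}$ is exactly the paper's passage to the conditional probabilities $j(k)$ (conditioning on $0,n-1\in S$) and its convolution identity $\ell(k)=\sum_{i\ge0}\frac{i+1}{2^{i+2}}\,j(k-2i)$; your uniform-in-$n$ tail bound via disjoint representing pairs is the paper's use of Martin--O'Bryant's Lemma 11; and your Markov-inequality treatment of large $k$ using $E[M]\le 3+o(1)$ is a clean substitute for the paper's combination of $\ell(2k+2)\ge\ell(2k)/2$ with $\sum_i i\,\ell(i)=6$.

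However, there is a genuine gap, and you have correctly located it yourself: the decisive inequality $p_2>p_1$, i.e.\ $\ell(4)>\ell(2)$, is never established. Everything else in the argument is scaffolding around this one fact, and it cannot be obtained by soft reasoning: the paper's rigorous bounds give $\ell(2)-\ell(4)\in(-0.00369,-0.00256)$, so the margin is about $0.003$ out of values near $0.19$. Certifying it requires (a) an explicit, quantitative truncation error for the law of $M'$ (the paper's analogue is $|j(2k)-g_k(m)|<\frac{16}{9}(3/4)^{m+1}$), (b) tracking how that error propagates through the convolution with $u$ into $\ell(2)-\ell(4)$ (the paper shows the propagated uncertainty is $\frac{5}{16}$ of the per-term error, which is what makes the comparison winnable at a feasible truncation depth), and (c) an exact enumeration at that depth (the paper computes $g_k(m)$ at $m=23$, roughly $4^m m^2$ work; $m=18$ already suffices for this particular inequality but smaller $m$ does not). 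Your proposal names none of these quantities, so as written it does not show that your truncation scheme can even resolve a gap of $0.003$, let alone produce the certified numbers. Until that computation is specified and carried out, the proof is incomplete at its essential point; the same remark applies to the finitely many other comparisons $p_2>p_m$ for $m\le K/2$, though those have much larger margins.
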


Similar to Theorem 1.9 in \cite{LMO}, we have the following result, which is used to prove Theorem \ref{thm:obs3}.

\begin{thm}\label{thm:l10}
The limiting probability distribution of missing differences,\newline $\ell(k):=\lin\pnm k$, is well-defined, positive on (and only on) even $k$'s, adds up to $1$, and satisfies \eq{\ell(10)\ < \ \ell(8)\ < \ \ell(0)\ < \ \ell(6)\ < \ \ell(2)\ < \ \ell(4).}
\end{thm}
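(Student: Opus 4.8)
The plan is to reduce the limiting missing-difference distribution to a single, $n$-free random variable governed by the two fringes of $S$, exactly in the spirit of Theorem 1.9 of \cite{LMO} for sums, and then to pin down the six values $\ell(0),\dots,\ell(10)$ by a truncated exact computation with a rigorous geometric tail bound. First I would set up the fringe reduction. Writing a positive difference as $d=n-1-j$ with $j\ge 0$ and recording the two fringes of $S$ by the indicators $x_b=\mathbf 1[b\in S]$ and $y_c=\mathbf 1[(n-1-c)\in S]$, one has $d=n-1-j\in S-S$ iff $x_b=y_{j-b}=1$ for some $0\le b\le j$, i.e.\ iff $j$ lies in the sumset $A+B$ of $A=\{b:x_b=1\}$ and $B=\{c:y_c=1\}$. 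For $n>2j+1$ the coordinates entering this event are distinct elements of $S$, so as $n\to\infty$ the two fringes converge to independent i.i.d.\ $\mathrm{Bernoulli}(1/2)$ sequences; for each fixed $j$ the $j+1$ pairs $(x_b,y_{j-b})$ use pairwise-disjoint coordinates, whence $P(j\notin A+B)=(3/4)^{j+1}$ and $E[M]=3$, where $M:=\#\{j\ge 0: j\notin A+B\}$. By the symmetry $m\in S-S\Leftrightarrow -m\in S-S$, missing differences occur in $\pm$ pairs while $0\in S-S$ always (as $P(S=\emptyset)\to 0$), so the total missing count equals $2M$ in the limit and $\lin\pnm{2m}=P(M=m)=:\ell(2m)$, with $\ell$ vanishing on odd arguments.

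Next I would record the three structural claims. Vanishing of $\ell$ on odd $k$ is immediate from the $\pm$ symmetry. Since $E[M]=3<\infty$ forces $M<\infty$ almost surely, $\sum_m P(M=m)=1$, giving $\sum_k\ell(k)=1$; the uniform bound $P(\exists\, j>K:\ j\notin A+B)\le 3(3/4)^{K+1}$, together with Markov applied to the uniformly bounded mean number of missing positive differences, supplies the tightness needed to interchange the limit with the sum and to rule out escaping mass. For positivity $\ell(2m)>0$ I would exhibit a positive-probability cylinder event: fix $x_0=y_0=1$ and $x_i=y_i=0$ for $1\le i\le m$, which makes $0$ present and exactly $1,\dots,m$ absent; fixing additionally a long enough block $y_{m+1}=\cdots=y_J=1$ (each covered through $0\in A$) makes the residual tail-miss probability strictly less than $1$, so with positive probability nothing beyond $m$ is missing and $M=m$ exactly.

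The heart is the ordering $P(M{=}5)<P(M{=}4)<P(M{=}0)<P(M{=}3)<P(M{=}1)<P(M{=}2)$. Here I would truncate, setting $M_T:=\#\{0\le j\le T: j\notin A+B\}$, and observe that for $j\le T$ membership $j\in A+B$ depends only on $A\cap[0,T]$ and $B\cap[0,T]$; hence $M_T$ is a function of $2(T+1)$ bits and each $P(M_T=m)$ is an exactly computable rational. Since $M-M_T=\#\{j>T\text{ missing}\}$, we get $|P(M=m)-P(M_T=m)|\le P(\exists\, j>T\text{ missing})\le 3(3/4)^{T+1}=:\varepsilon_T$. I would then compute $P(M_T=m)$ for $m=0,\dots,5$ at a truncation $T$ large enough that every gap in the target chain exceeds $2\varepsilon_T$, and read off the six strict inequalities.

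The main obstacle is precisely this last computation. The events $\{j\notin A+B\}$ are strongly correlated across $j$ because they share fringe coordinates, so there is no evident closed form, and resolving the narrowest gap in the chain forces $T$ large enough that $\varepsilon_T$ is genuinely small; this demands a sizable but exact enumeration, best organized as a dynamic-programming pass over the $2(T+1)$ fringe bits rather than naive $4^{T+1}$ brute force, carried out in certified rational arithmetic. A secondary technical point is making the reduction fully rigorous: one must check that the finite-$n$ miss probabilities agree with the independent-fringe model up to $o(1)$ uniformly in $j$, the only subtlety being the mild coordinate sharing among the representations of a single difference, which is handled by extracting $\lfloor (j+1)/2\rfloor$ pairwise-disjoint pairs so that geometric decay persists and the truncation error for $\pnm{2m}$ is controlled simultaneously in $n$ and in $T$.
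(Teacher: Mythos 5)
Your setup is sound and is essentially the paper's own first framework: the window/fringe reduction is the paper's $f_k(m)$ (Claim \ref{thm:pnmlim} and Proposition \ref{thm:lbound}), your geometric tail bound via pairwise-disjoint representative pairs is exactly Lemma 11 of \cite{MO}, your cylinder-event positivity and the $E[M]=3$ computation (consistent with Lemma \ref{thm:el}, $\sum i\,\ell(i)=6$) are fine, and the truncation error $|P(M=m)-P(M_T=m)|\le 3(3/4)^{T+1}$ is the right shape. The problem is the decisive step. The narrowest gap in the chain is $\ell(4)-\ell(2)\approx 0.003$, so your requirement $2\varepsilon_T<\mathrm{gap}$ forces $T\ge 26$, i.e.\ an exact enumeration over $2(T+1)=54$ fringe bits. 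This is precisely the computation the paper identifies as infeasible (it is the ``$m\ge 27$, about $25.2$ years'' estimate preceding Lemma \ref{thm:localis}). Your hope that a dynamic-programming pass beats the $4^{T+1}$ enumeration is not substantiated and is doubtful: whether $j\in A+B$ is a sumset condition, so a streaming DP over the bits must remember essentially the full pair of fringe prefixes to decide future coverage, and no small state space is available. As written, your plan ends at a computation you cannot perform.

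The missing idea is the paper's conditional-probability refinement. Define $j(k)$ as the limit of $P(|S-S|=2n-1-k \mid 0,n-1\in S)$. Conditioning on the two endpoints shrinks the per-term truncation error by a factor of $4/9$ (Lemma \ref{thm:localis}, Proposition \ref{thm:jbound}), and, more importantly, the inclusion--exclusion over whether $0$ and $n-1$ lie in $S$ yields the recursion $\ell(k)=\tfrac{j(k)}{4}+\ell(k-2)-\tfrac{\ell(k-4)}{4}$, whence $\ell(k)-\ell(k+2)=-\tfrac14 j(k+2)+\sum_{i\ge1}\tfrac{i}{2^{i+3}}j(k-2i)$ (Corollary \ref{thm:ldiff}). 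The coefficients sum to $5/16$, so the error in each \emph{difference} $\ell(k)-\ell(k+2)$ is $\tfrac{5}{16}$ of a single-term error rather than $2$ times it. Together these reduce the required window to $m=23$ and the run time to days. If you want to salvage your route, you must either import this conditioning trick (or an equivalent variance-reduction device) or demonstrate concretely that the $54$-bit enumeration is achievable; without one of these, the ordering $\ell(10)<\ell(8)<\ell(0)<\ell(6)<\ell(2)<\ell(4)$ is not established.
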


Rigorous bounds for $\ell(k)$ are given in Theorem \ref{thm:lreal}. As a corollary, we provide an asymptotic bound for the OEIS sequence A103295, which counts the number of complete rulers\footnote{See Definition \ref{defn:compruler}.}.

\begin{thm}\label{thm:103295}
The OEIS sequence A103295 satisfies $a_n\sim c\cdot2^{n}$, where $0.2433<c<0.2451$.
\end{thm}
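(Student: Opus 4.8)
The plan is to connect the combinatorial object counted by A103295 to the missing-difference probabilities $\pnm k$, and then invoke the rigorous bounds on $\ell(k)$ from Theorem~\ref{thm:lreal}. First I would recall the definition of a complete ruler of length $n$: a set $S \subseteq [n]$ containing both endpoints $0$ and $n-1$ such that every integer distance between $0$ and $n-1$ is realized as a difference of two marks, i.e. $\{0,1,\dots,n-1\} \subseteq S-S$. Since $S-S$ is symmetric and the largest possible difference is $n-1$, a complete ruler is precisely a set $S$ with $0, n-1 \in S$ and $|S-S| = 2n-1$, that is, a set missing \emph{no} differences. The key translation step is therefore to express $a_n$, the number of complete rulers, in terms of the probability $\pnmb{n}{}$ of missing differences. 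Because demanding $0, n-1 \in S$ is a conditioning event of probability $1/4$ (assuming independence at the two endpoints), I expect $a_n$ to relate to $2^n \cdot P(|S-S| = 2n-1 \mid 0, n-1 \in S)$ up to the factor accounting for this conditioning; I would verify the exact constant by carefully bookkeeping which sets are being counted.

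The main quantitative input is the $k=0$ case of the limiting distribution, $\ell(0) = \lin \pnm 0$, which is exactly the limiting probability of missing zero differences. I would argue that complete rulers correspond, after the endpoint normalization, to the $k=0$ term, so that $a_n / 2^n$ converges to a constant $c$ that is a fixed rational multiple of $\ell(0)$ (I anticipate $c = \ell(0)/4$, but the precise relationship must be pinned down from the definitions). The numerical bounds $0.2433 < c < 0.2451$ should then follow by substituting the rigorous two-sided bounds on $\ell(0)$ provided in Theorem~\ref{thm:lreal} and multiplying through by the conditioning constant. This reduces the asymptotic claim $a_n \sim c \cdot 2^n$ to the existence and positivity of the limit $\ell(0)$, which Theorem~\ref{thm:l10} already guarantees.

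The step I expect to be the main obstacle is establishing the precise correspondence between ``complete ruler'' and ``$S$ missing no differences,'' including the exact combinatorial factor relating $a_n$ to $2^n \ell(0)$. Two subtleties arise. First, the standard OEIS convention may or may not force both endpoints to be present and may count rulers up to reflection or not; I would need to check A103295's exact definition and reconcile it with our symmetric diffset picture, absorbing any factor of $2$ (for reflection) or $4$ (for endpoint conditioning) into the constant. Second, our probability $\pnm 0$ is defined over \emph{all} subsets of $[n]$ with each element included independently with probability $1/2$, whereas complete rulers are by definition anchored at $0$ and $n-1$; I must show that the limiting proportion of difference-complete sets, when restricted to those containing both endpoints, still converges to the same $\ell(0)$ (the fringe-interaction heuristic from the introduction suggests the endpoint conditioning only rescales, but not reshapes, the limit). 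Once this dictionary is made rigorous, the asymptotic and the numerical window follow immediately from Theorem~\ref{thm:lreal}.
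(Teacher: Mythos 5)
Your overall strategy --- identify complete rulers with sets missing no differences and then feed in the bounds on $\ell(0)$ from Theorem \ref{thm:lreal} --- is the same as the paper's, but your treatment of the constant contains a genuine error that would wreck the numerics. You propose conditioning on $0, n-1 \in S$ (an event of probability $1/4$) and anticipate $c = \ell(0)/4 \approx 0.030$, which is incompatible with the claimed window $0.2433 < c < 0.2451$. The conditioning is spurious: if $S \subseteq [n]$ satisfies $|S-S| = 2n-1$, then in particular $n-1 \in S-S$, which already forces $0, n-1 \in S$. So the event ``$S$ is a complete ruler of length $n-1$'' is exactly the \emph{unconditioned} event ``$|S-S| = 2n-1$'', the count of such rulers is exactly $\pnm{0}\cdot 2^n$, and no correction factor for the endpoints is needed. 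For the same reason, the sub-argument you anticipate --- showing that the endpoint-conditioned limiting probability is still (a rescaling of) $\ell(0)$ --- never has to be made.

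The only bookkeeping actually required is the off-by-one between a ruler's length and the size of its ground set: a ruler of length $n$ is a subset of $\{0,\dots,n\}$, which has $n+1$ elements, so $a_n = \pnmb{n+1}(0)\cdot 2^{n+1} \sim \ell(0)\cdot 2^{n+1} = 2\ell(0)\cdot 2^{n}$, giving $c = 2\ell(0)$. Substituting $0.12165 < \ell(0) < 0.12255$ from Theorem \ref{thm:lreal} then yields exactly $0.2433 < c < 0.2451$. You were right to flag the exact OEIS convention as the delicate point, but the resolution is that completeness itself pins down both endpoints, so the correct factor is $2$ (from the length shift), not $1/4$ from conditioning.
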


\section{Results about having (few) differences}

We give a few straightforward results on having few differences.

\begin{defi} A sequence $Q$ has a divot at $i$ if $Q_i$ is smaller than the nearest non-zero neighbor on each side of the sequence.\end{defi}

Note in the above definition we require the neighbors to be non-zero; this is important as the cardinalities of the number of missing differences is always even.

\begin{prop}\label{thm:divot5} For all $n\ge 4$, $\pnhz$ has a divot at 5: $\pnh 3 > \pnh 5 <\pnh 7$.\end{prop}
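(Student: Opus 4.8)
The plan is to convert all three probabilities into exact counts of small subsets. The key structural observation is a cardinality bound: for nonempty $S = \{s_1 < s_2 < \cdots < s_m\}$ the diffset is symmetric about $0$, so $|S-S| = 2d+1$ where $d$ is the number of distinct positive differences, and the chain $s_2 - s_1 < s_3 - s_1 < \cdots < s_m - s_1$ already exhibits $m-1$ distinct positive differences. Hence $d \ge m-1$, so $|S-S| = 2d+1$ forces $|S| \le d+1$. This is what makes $\pnh 3, \pnh 5, \pnh 7$ computable: having few differences forces $S$ to be small.

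Applying the bound, I would classify the contributing sets. For $|S-S| = 3$ we need $d = 1$, hence $|S| = 2$, giving $\pnh 3 = \binom{n}{2}$. For $|S-S| = 5$ we need $d = 2$, hence $|S| \le 3$; a pair gives only one difference, while a triple $\{a<b<c\}$ has differences $b-a,\, c-b,\, c-a$ with $c-a$ strictly largest, so exactly two distinct values occur iff $b-a = c-b$, i.e.\ iff the triple is a $3$-term arithmetic progression. Writing $A_3$ for the number of $3$-APs in $[n]$, this gives $\pnh 5 = A_3$. For $|S-S| = 7$ we need $d = 3$, hence $|S| \le 4$; the triples that contribute are precisely the non-AP triples (count $\binom{n}{3} - A_3$), and the crux of the argument is to show that a four-element set has exactly three positive differences iff it is a $4$-term AP. With $A_4$ the number of $4$-APs, this yields $\pnh 7 = \binom{n}{3} - A_3 + A_4$.

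The four-point classification is the step I expect to require the most care. Writing the consecutive gaps of $\{a<b<c<e\}$ as $x_1, x_2, x_3 > 0$, the six positive differences are $x_1,\, x_2,\, x_3,\, x_1+x_2,\, x_2+x_3,\, x_1+x_2+x_3$; the diameter $x_1+x_2+x_3$ is the unique largest, so ``exactly three distinct differences'' means the remaining five values take only two, say $p < q$. Since $x_1 + x_2 > x_1 \ge p$ and $x_2 + x_3 > x_3 \ge p$, both skip-one sums must equal $q$; subtracting forces $x_1 = x_3$, and then the constraint $\{x_1,x_2\} \subseteq \{p,q\}$ together with $x_1 + x_2 = q$ leaves only $x_1 = x_2 = x_3$, i.e.\ a $4$-AP. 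I would record this as a short lemma before assembling the count.

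Finally I would close with elementary arithmetic, using the closed form $A_3 = \lfloor (n-1)^2/4 \rfloor$ (verified by the even/odd cases). The inequality $\pnh 3 > \pnh 5$ becomes $\binom{n}{2} > A_3$, immediate since $\binom{n}{2} \sim n^2/2$ while $A_3 \sim n^2/4$, and checked directly for small $n$. The inequality $\pnh 5 < \pnh 7$ becomes $2A_3 - A_4 < \binom{n}{3}$; here I would show $2A_3 \le \binom{n}{3}$ for all $n \ge 4$, with equality only at $n = 4$, via the explicit formulas, and note $A_4 \ge 1$ because $\{0,1,2,3\} \subseteq [n]$. For $n \ge 5$ the bound $2A_3 < \binom{n}{3}$ is already strict, and at $n = 4$ the strict inequality $4 - 1 < 4$ uses $A_4 \ge 1$. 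This boundary case at $n = 4$, where $2A_3 = \binom{n}{3}$, is the one place the $4$-AP count is genuinely needed, and it is the only real subtlety in the concluding computation.
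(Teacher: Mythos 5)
Your proof is correct and follows essentially the same route as the paper: the same AP characterizations of the sets with $|S-S|\in\{3,5,7\}$, the same counts $\binom{n}{2}$, $A_3$, and $\binom{n}{3}-A_3+A_4$, and the same concluding inequality $2A_3\le\binom{n}{3}$ with strictness supplied by the positive $4$-AP count. The only difference is that you supply justifications (the chain bound $|S|\le d+1$ and the gap analysis for four-element sets) for the characterizations that the paper states without proof.
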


\begin{proof} We have the following characterizations, where we abbreviate a set $S$ is an arithmetic progression\footnote{This means there are integers $a, d$ and $m$ such that $S = \{a, a + d, a + 2d, \dots, a + md\}$.} by writing $S$ is an AP.
\begin{itemize}
    \item $|S-S|=3\Longleftrightarrow |S|=2$.
    \item $|S-S|=5\ifff |S|=3\tand S\text{ is an AP (e.g., }\{3,8,13\}\text{)}$.
    \item $|S-S|=7\ifff |S|=3\tand S\text{ is not an AP,} \tor |S|=4\tand S\text{ is an AP}$.
\end{itemize}

Thus, by counting arithmetic progressions, the following equations hold: \eq{2^n \pnh 3& \ = \ {n\choose 2}\nonumber\\2^n \pnh 5& \ = \ {\lfloor\novt\rfloor \choose 2}+{\lfloor\frac{n+1}{2}\rfloor \choose 2}\nonumber\\ 2^n \pnh 7& \ = \ {n\choose 3} - 2^n \pnh 5 + \su_{i=0}^2{\lfloor\frac{n+i}{3}\rfloor \choose 2}.} When $n\ge 4$, we have \eq{\pnh 3 \ > \ \pnh 5 \ \le\ \frac{{n\choose 3}}{2^n} - \pnh 5\ <\ \pnh 7.}
\end{proof}

In view of the proof, for any $k$ we see that $\pnh k$ can be written in a closed form in terms of $n$. Straightforward analysis shows the following.

\begin{prop} For all $n\ge 7$, $\pnhz$ has a divot at 9.\end{prop}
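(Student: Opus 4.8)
The plan is to mimic exactly the strategy used for the divot at $5$ in Proposition \ref{thm:divot5}, pushing the same combinatorial classification up by one ``layer.'' First I would establish exact characterizations of the sets $S$ giving $|S-S| = 9$ and $|S-S| = 11$, and recall those for $|S-S| = 7$ from the previous proof. Since $|S-S|$ is always odd (or zero), $9$ is a genuine candidate for a divot, and the goal is the chain $\pnh 7 > \pnh 9 < \pnh{11}$. A set with $|S-S|=9$ has exactly four positive differences together with their negatives and $0$, so such $S$ should have $|S|=4$ with a prescribed difference structure (not an AP, since an AP of length $4$ gives $|S-S|=7$), or possibly $|S|=5$ in some degenerate configuration; the characterization needs to enumerate precisely which difference-multisets of the right size produce exactly four distinct positive differences.

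The key computational step is to count, as a function of $n$, the number of subsets of $[n]$ of each relevant cardinality and difference-pattern. As in the $k=5$ case, each count becomes a sum of binomial coefficients indexed by the possible common differences or gap-patterns, with floor functions appearing because the embedding of a fixed difference-pattern of total span $\ell$ into $[n]$ can be done in $n-\ell$ ways. Thus I would write $2^n\pnh 9$ and $2^n\pnh{11}$ as explicit closed forms analogous to the displayed formulas for $\pnh 5$ and $\pnh 7$, leading terms being polynomials in $n$ of the appropriate degree (the dominant contribution to having few differences comes from the smallest admissible $|S|$, so $\pnh{2j+1}$ is governed by a $\binom{n}{?}/2^n$-type term).

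With these closed forms in hand, the inequalities reduce to polynomial comparisons. The right inequality $\pnh 9 < \pnh{11}$ should follow from a leading-order argument: $\pnh{11}$ picks up contributions from a strictly richer family of configurations (more non-AP patterns, or one larger cardinality class) whose count dominates for large $n$, mirroring how the proof of Proposition \ref{thm:divot5} bounded $\pnh 7$ below by $\binom{n}{3}/2^n - \pnh 5$. The left inequality $\pnh 7 > \pnh 9$ is the easy direction, since having fewer differences is driven by smaller/more-structured sets, which are more numerous. I expect the main obstacle to be \emph{the case analysis in the characterization step}: unlike $|S-S|=5$, where the only option is a length-$3$ AP, the configurations yielding exactly four positive differences are more varied, and one must be careful not to double-count sets that admit multiple gap-descriptions, nor to miss borderline small-$n$ coincidences. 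This is precisely why the threshold is $n \ge 7$ rather than $n \ge 4$: the claimed inequalities can fail for small $n$ where the lower-order correction terms and floor-function discrepancies are not yet dominated by the leading polynomial behavior, so after deriving the closed forms I would verify the finitely many small cases ($7 \le n \le n_0$) directly and establish the inequalities asymptotically for $n > n_0$.
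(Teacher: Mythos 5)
Your plan is essentially the paper's own approach: the paper proves the divot at $5$ by characterizing the sets with each small value of $|S-S|$, counting them via binomial-coefficient closed forms, and then simply remarks that $\pnh{k}$ admits such a closed form for every $k$ so that the divot at $9$ follows by ``straightforward analysis''; your proposal spells out exactly that program (characterize $|S-S|=9$ as $|S|=4$ with exactly four distinct positive differences or $|S|=5$ an AP, derive closed forms, compare leading terms, check small $n$ directly). The paper gives no more detail than you do, so your outline is a faithful --- indeed somewhat more explicit --- version of its argument.
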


\begin{prop} For all $n\ge 12$, $\pnhz$ has a divot at 15.\end{prop}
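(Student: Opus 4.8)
The plan is to extend the shape-counting argument of Proposition~\ref{thm:divot5} from three-term progressions to every small configuration that can produce $|S-S|\in\{13,15,17\}$. Since $|S-S|$ is invariant under translation and reflection, for a finite $T\subseteq\mathbb{Z}$ with $\min T=0$ write $\operatorname{diam}(T)=\max T$; then
\eq{2^n\,\pnh k\ =\ \sum_{\substack{T:\ \min T=0\\ |T-T|=k}}\bigl(n-\operatorname{diam}(T)\bigr)_+,}
each ``shape'' $T$ contributing its number of translates inside $[n]$. Writing $|S-S|=2r+1$, where $r$ is the number of distinct positive differences, the organizing principle is that a generic (Sidon) set of size $m$ has $r=\binom m2$ whereas an arithmetic progression has $r=m-1$; imposing one coincidence among the differences is a codimension-one condition on the gap vector of $T$, and a family of shapes with $d$ free gap parameters contributes a term of degree $d+1$ in $n$ to $2^n\pnh k$ (the extra $+1$ coming from the translate count). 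This reduces the problem to reading off, for each of $k=13,15,17$, the top-dimensional families.

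Carrying this out, the equation $m^2-m+1=13$ has the integer root $m=4$, so $13$ is the \emph{generic} diffset size of a four-element set and $2^n\pnh{13}$ contains the term $\binom n4\sim n^4/24$ coming from four-element Sidon sets, with strictly lower-order contributions from $m\in\{5,6,7\}$. By contrast $m^2-m+1=15$ and $m^2-m+1=17$ have no integer solutions, so every shape contributing to $\pnh{15}$ or $\pnh{17}$ must carry coincidences among its differences. Tracking the free gap parameters, the largest families are: for $k=15$, five-element sets with three coincidences, eight-term progressions, and near-progressions with $m\in\{6,7\}$, all contributing degree $2$; and for $k=17$, the two-parameter family of five-element sets with exactly two coincidences, contributing degree $3$. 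Hence $2^n\pnh{13}$, $2^n\pnh{15}$, $2^n\pnh{17}$ grow like $n^4$, $n^2$, $n^3$ respectively, so $\pnh{15}$ is the unique term of lowest order and both $\pnh{13}>\pnh{15}$ and $\pnh{17}>\pnh{15}$ hold for all sufficiently large $n$.

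To sharpen ``sufficiently large'' to $n\ge 12$ I would make everything explicit, exactly as in Proposition~\ref{thm:divot5}: classify the admissible shapes of each size, enumerate the finitely many infinite families (such as the $\{0,1,2,3,x\}$-type configurations) together with their small-diameter exceptions, and extract closed quasi-polynomial (parity-dependent) formulas for $2^n\pnh{13}$, $2^n\pnh{15}$, $2^n\pnh{17}$. The divot then follows by bounding $2^n\pnh{15}$ above by its degree-two formula and the two neighbours below by their leading terms for large $n$, and by direct evaluation for the finitely many remaining $n$; this is also where the boundary case $n=11$, at which $\pnh{13}=269<275=\pnh{15}$, is confirmed to be genuinely excluded. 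The main obstacle is precisely this classification: whereas $\pnh5$ involved only three-term progressions, here $13,15,17$ are realized by five-, six-, and seven-element sets exhibiting several distinct patterns of coincident differences (including higher-multiplicity collisions in which three or more pairs share a value), so the real work is to enumerate all coincidence types without omission and to account correctly for the small-diameter shapes that deviate from the generic count.
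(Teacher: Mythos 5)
Your overall strategy is the one the paper intends (it dispatches this proposition with ``straightforward analysis'' of the closed forms obtained by classifying shapes, exactly as in Proposition~\ref{thm:divot5}), but the key quantitative step in your argument is wrong. The principle that ``imposing one coincidence among the differences is a codimension-one condition on the gap vector'' fails: a single linear relation among the gaps can force \emph{several} coincidences among the $\binom m2$ pairwise differences at once, so the number of excess coincidences is not the codimension. Concretely, the two-parameter family with gap vector $(d,d,d,e)$, i.e.\ $S=\{0,d,2d,3d,3d+e\}$ with $e\notin\{d,2d,3d\}$, has exactly $7$ distinct positive differences (e.g.\ $\{0,2,4,6,9\}$ gives $\{2,3,4,5,6,7,9\}$), hence $|S-S|=15$; its three excess coincidences cost only two independent equations, so it contributes $\Theta(n^3)$ to $2^n\pnh{15}$, not $\Theta(n^2)$. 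Worse, the \emph{three}-parameter family with gap vector $(a,b,a,c)$ generically has exactly $8$ distinct positive differences (e.g.\ $\{0,1,3,4,9\}$), because the single equation $g_1=g_3$ automatically forces the second coincidence $g_1+g_2=g_2+g_3$; together with the analogous patterns $(a,b,c,a)$ and $(c,a,b,a)$ this contributes $\Theta(n^4)$ to $2^n\pnh{17}$. So the true orders are
\eq{2^n\pnh{13}\ \sim\ \tfrac{1}{24}n^4,\qquad 2^n\pnh{15}\ =\ \Theta(n^3),\qquad 2^n\pnh{17}\ \sim\ c\,n^4\ \ (c\approx 1/16),}
not $n^4,n^2,n^3$ as you claim. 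The appendix tables corroborate this: the second differences of $2^n\pnh{15}$ grow with $n$ (ruling out quadratic growth), and $2^{36}\,P_{36}^{\text{H}}(17)/36^4\approx 0.063$.

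The final ordering $\pnh{13}>\pnh{15}<\pnh{17}$ for large $n$ does survive, since $n^3=o(n^4)$ on both sides, but it survives for reasons other than the ones you give, and the flaw is not cosmetic: your program for pinning down $n\ge 12$ is to enumerate the ``top-dimensional families'' and extract closed quasi-polynomial formulas, and the enumeration you describe (``five-element sets with three coincidences \dots all contributing degree $2$''; ``the two-parameter family \dots contributing degree $3$'' for $k=17$) omits precisely the dominant families exhibited above. A corrected classification must organize shapes by the number of \emph{independent} linear relations on the gap vector (equivalently, the dimension of the family), not by the number of coincidences, and must check for each relation which further coincidences it entails; you flag ``higher-multiplicity collisions'' as a difficulty at the end, but they are not a side issue --- they change the leading-order terms you are comparing.
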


The above allows us to conclude Theorem \ref{thm:propobs2}. \hfill $\Box$



\section{Results about missing (few) differences}

\subsection{Intuitively measuring the limiting probabilities}\


We show that the limiting probability of having $k$ differences, and that of missing $k$ differences, exist. The latter (Claim \ref{thm:pnmlim}) is a special case of Theorem 1.3 in \cite{Zh2}, but as some parts of this argument will be used later, we provide details.

\begin{cla}\label{thm:pnhlim} For all $k\ge 0$, $\lin\pnh k=0$.\end{cla}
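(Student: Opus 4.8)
The plan is to reduce the event $\{|S-S|=k\}$ to an event depending only on the cardinality $|S|$, and then exploit that a uniformly random subset of $[n]$ is exponentially unlikely to be small. First I would recall the standard lower bound for difference sets: for any nonempty finite set of integers $S$ one has $|S-S|\ge 2|S|-1$, with equality precisely when $S$ is an arithmetic progression (this is the same structural input underlying the characterizations in the proof of Proposition \ref{thm:divot5}). Consequently, $|S-S|=k$ forces $|S|\le (k+1)/2$, so the event $\{|S-S|=k\}$ is contained in the event $\{|S|\le (k+1)/2\}$, and therefore $\pnh k\le P\!\left(|S|\le (k+1)/2\right)$.

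Next I would estimate this latter probability directly. Since each element of $[n]$ is placed in $S$ independently with probability $1/2$,
\eq{P\left(|S|\le \tfrac{k+1}{2}\right)\ =\ 2^{-n}\sum_{j=0}^{\lfloor (k+1)/2\rfloor}\binom{n}{j}.}
For fixed $k$, every binomial coefficient $\binom{n}{j}$ with $j\le (k+1)/2$ is a polynomial in $n$ of bounded degree, so the whole sum grows only polynomially in $n$ while the prefactor $2^{-n}$ decays exponentially. Hence the right-hand side tends to $0$, giving $\lin\pnh k=0$.

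This argument has essentially no hard step: the only ingredient beyond the elementary tail bound is the difference-set inequality $|S-S|\ge 2|S|-1$, which is classical. One may additionally observe that for even $k\ge 2$ the claim is vacuous, since $|S-S|$ is always $0$ or odd and thus $\pnh k=0$ identically; the genuine content is only for $k=0$ and odd $k$. The one point requiring care is simply the bookkeeping around the threshold $(k+1)/2$, together with, if one wishes, a separate mention of the degenerate cases $|S|\in\{0,1\}$ (which yield $|S-S|\in\{0,1\}$). I expect no real obstacle here; the estimate is robust because the exponential factor $2^{-n}$ dominates any fixed-degree polynomial growth in $n$.
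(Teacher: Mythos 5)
Your argument is correct and is essentially the paper's: the paper simply notes $P(|S-S|=k)\le P(|S|\le k)\to 0$, i.e.\ it reduces to the same tail event for $|S|$ using the cruder containment $|S-S|\ge |S|$ in place of your sharper $|S-S|\ge 2|S|-1$. The extra precision changes nothing in the conclusion, so the two proofs are the same in substance.
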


\begin{proof} The claim follows immediately by noting $P(|S-S|=k)\le P(|S|\le k)\to 0$. \end{proof}


\begin{cla}\label{thm:pnmlim} For all $k\ge 0$, $\lin\pnm k$ exists and $\sui0\lin\pnm i = 1$.\end{cla}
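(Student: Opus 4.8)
The plan is to localize the event ``exactly $k$ differences are missing'' to a bounded fringe of $S$ whose distribution does not depend on $n$, and to control the interior by a direct tail estimate. Since $m\in S-S\iff -m\in S-S$ and $0\in S-S$ whenever $S\ne\emptyset$, the missing differences come in pairs $\{d,-d\}$, so the count $2n-1-|S-S|$ is governed by the positive missing differences. A positive difference $d=n-1-j$ lies in $S-S$ exactly when $a\in S$ and $a+(n-1-j)\in S$ for some $a\in\{0,\dots,j\}$, which depends only on the left fringe $\{0,\dots,j\}\cap S$ and the right fringe $\{n-1-j,\dots,n-1\}\cap S$. Hence, for a threshold $J$ and all $n>2J+1$ (so the two fringes are disjoint), the pattern of present/missing differences among the top band $\{d:\,n-1-J\le |d|\le n-1\}$ is a function of $2(J+1)$ independent fair bits whose joint law is a fixed distribution \emph{independent of $n$}. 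Write $p_{k,J}$ for the probability, under this $n$-free fringe law, that exactly $k$ differences (counted with their negatives) are missing in this band.

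The main quantitative input is a uniform tail bound. For fixed positive $d$, among the $n-d$ pairs $(a,a+d)$ one can extract at least $\lceil(n-d)/2\rceil$ pairwise disjoint ones (an independent set in the graph on $\{0,\dots,n-1-d\}$ joining $a$ to $a+d$); these use disjoint elements, so the events ``this pair is not contained in $S$'' are independent with probability $3/4$ each. Therefore
\[
P(d\notin S-S)\ \le\ (3/4)^{\lceil(n-d)/2\rceil}.
\]
Summing the resulting geometric series over all $d$ bounds the expected number of missing differences by an absolute constant uniformly in $n$, and shows that the probability $P(A_n^c)$ that \emph{some} difference outside the top band is missing is at most $\epsilon(J):=2\su_{j>J}(3/4)^{\lceil j/2\rceil}$, which tends to $0$ as $J\to\infty$ independently of $n$. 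On $A_n$ all missing differences lie in the top band, so $2n-1-|S-S|$ equals the fringe quantity; comparing the two descriptions gives $|\pnm k-p_{k,J}|\le\epsilon(J)$ for every $n>2J+1$.

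Existence of $\lin\pnm k$ then follows by a Cauchy argument. Writing $f_n:=\pnm k$ and letting $f_m$ be the analogous probability for parameter $m$, for $n,m>2J+1$ both lie within $\epsilon(J)$ of the $n$-free number $p_{k,J}$, so $|f_n-f_m|\le 2\epsilon(J)\to 0$; call the limit $\ell(k)$. For the normalization, the uniform bound $E[\,2n-1-|S-S|\,]\le C$ from the same estimate together with Markov's inequality gives tightness, namely $\su_{k>K}\pnm k=P(2n-1-|S-S|>K)\le C/K$ uniformly in $n$. Combining this with the pointwise convergence just proved (via Fatou for one direction and tightness for the other) lets me interchange the limit with the infinite sum and conclude $\sui0\ell(i)=\lin\su_k\pnm k=1$.

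The step I expect to be the real obstacle is the uniform-in-$n$ tail control rather than the fringe localization: a naive union bound over the $n-d$ representations of $d$ fails because the pairs overlap, so the disjoint-pairs extraction, which restores independence and genuine geometric decay, is essential. It is exactly this uniform summable bound that simultaneously renders the interior negligible and supplies the tightness needed to move the limit inside the sum in the normalization claim.
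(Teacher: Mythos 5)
Your proof is correct and follows essentially the same route as the paper's: localize the missing differences to an $n$-independent fringe law (your $p_{k,J}$ plays the role of the paper's $f_k(m)$), bound the probability that any difference outside the top band is missing by a geometric tail uniform in $n$, and conclude via a Cauchy argument. The only substantive differences are that you prove the tail bound $P(d\notin S-S)\le(3/4)^{\lceil(n-d)/2\rceil}$ yourself via a disjoint-pairs (matching) extraction instead of quoting Lemma 11 of \cite{MO}, and you are somewhat more explicit than the paper about the tightness/Markov step used to interchange the limit with the infinite sum in $\sui0\lin\pnm i=1$.
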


\begin{proof} Recall Observation 1: when $k$ is odd, for all $n\neq\frac{k+1}{2}$ we have $\pnm k=0$. We are interested in evens.

$\forall k\ge 0,\forall m>k,\forall \epsilon>0, \forall n>2m,\forall S\subseteq [n],$ if $\{0,\dots,n-m-1\}\subseteq S-S$, then
\smeq{|(S-S)\cap \{n-m, \dots, n-1\}| \ = \ m-k&
\ \ifff\ |(S-S)\cap \{0, \dots, n-1\}|\  = \ n-k\nonumber\\
&\ \ifff\ |S-S|\ = \ 2n-1-2k.}
Thus \eq{{}&\left|\pnm{2k} - P\left(\left|\left(S-S\right)\cap \{n-m, \dots, n-1\}\right|=m-k\right)\right|\nonumber\\
\ \le\ &P(\{0,\dots,n-m-1\}\subsetneq S-S).}
The main term is constant with respect to $n$:
\smeq{
    {}&P\left(|(S-S)\cap \{n-m, \dots, n-1\}|=m-k\right)\nonumber\\
    =\ \ {}&P\left(\left|\left(\left(S\cap \{n-m,\dots,n-1\}\right)-\left(S\cap\{0,\dots,m-1\}\right)\right)\cap \{n-m, \dots, n-1\}\right|=m-k\right)\nonumber\\
    =\ \ {}&P_{S_1\subseteq [n]\setminus (n-m), S_2\subseteq [m]}\left(\left|\left(S_1-S_2\right)\cap \{n-m, \dots, n-1\}\right|=m-k\right)\nonumber\\
    =\ \ {}&P_{S\subseteq [2m]}\left(\left|\left(S-S\right)\cap \{m, \dots, 2m-1\}\right|=m-k\right)\nonumber\\
    =:\ {}&f_k(m).}

By Lemma 11 in \cite{MO}\footnote{It states that if $A$ is a uniformly randomly chosen subset of $[n]$, then \eqn{P(k\notin S-S)\begin{cases}\le \tvfp^{n/3} & 1\le k \le \novt\\ \le \tvfp^{n-k} & \novt \le k \le n-1.\end{cases}}}, \eq{
    P(\{0,\dots,n-m-1\}\subsetneq S-S)
    &\ \le\ \su_{i=0}^{n-m-1}P(i\notin S-S)\nonumber\\
    &\ \le\ \su_{i=0}^{\lfloor\novt\rfloor-1}\tvfp^\frac{n}{3} + \su_{i=\lfloor\novt\rfloor}^{n-m-1}\tvfp^{n-i}\nonumber\\
    &\ <\ \tvfp^\frac{n}{3}\cdot\novt +\tvfp^{m+1}\cdot 4\nonumber\\
    &\ <\ \epsilon+4\tvfp^{m+1}\text{ for sufficiently large }n.}

For sufficiently large $n$,
\be\label{eq:pnmbound}
    \left|\pnm{2k}-f_k(m)\right|\ < \ \epsilon+4\tvfp^{m+1}.
\ee

By the arbitrariness of $m\tand\epsilon$, $\{\pnm{2k}\}_n$ is Cauchy and so converges. The rest of the claim follows from non-negativity of the limits and the fact $\su_{i=0}^{2n-1}\pnm i=1$.
\end{proof}

\begin{rek}\label{thm:unicorn} Note $m>k$ is not needed, and since the bounded error, $\epsilon+4\tvfp^{m+1}$, is irrelevant to $k$, the convergence is uniform.
		
\end{rek}

\begin{defi} Let
$\ell(k):=\lin\pnm k$.
\end{defi}

\begin{lem}\label{thm:lovt}
For all $k\ge 0$, we have $\ell(2k+2)\ \ge\ \ell(2k)/2$.
\end{lem}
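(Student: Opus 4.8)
The plan is to descend to the finite quantities $f_k(m)$ introduced in the proof of Claim~\ref{thm:pnmlim} and to build an explicit, almost $2$-to-$1$ map that creates exactly one extra missing difference. Taking $\lin$ in \eqref{eq:pnmbound} gives $|\ell(2k)-f_k(m)|\le 4\tvfp^{m+1}$, and then $m\to\infty$ yields $\ell(2k)=\lim_{m\to\infty}f_k(m)$; so it suffices to prove $f_{k+1}(m)\ge\tfrac12 f_k(m)$ up to an error vanishing with $m$. The structural fact I would use is that, for $S\subseteq[2m]$, the top difference $2m-1-j$ (with $0\le j\le m-1$) is present if and only if $c_j:=\sum_{i=0}^{j}a_i\beta_{j-i}>0$, where $a_i=\mathds 1[i\in S]$ encodes the lower fringe and $\beta_j=\mathds 1[2m-1-j\in S]$ the upper fringe. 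For $j\le m-1$ these two fringes touch disjoint elements of $[2m]$, so the count of missing top differences is determined by the convolution sequence $(c_j)_{j=0}^{m-1}$ alone.

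Next I would define the shift map $\Psi(S)=\{s\in S:\ s\le m-1\}\cup\{s-1:\ s\in S,\ s\ge m+1\}$: keep the lower fringe, slide the upper fringe down by one, vacate the top element $2m-1$, and forget whether $m\in S$. This $\Psi$ is $2$-to-$1$ onto $\{S'\subseteq[2m]:2m-1\notin S'\}$, the two preimages of each image differing only in the membership of $m$. A short computation with the convolution shows the image sequence obeys $c'_0=0$ and $c'_j=c_{j-1}$ for $1\le j\le m-1$, so the missing-index set of $\Psi(S)$ is exactly $\{0\}\cup\big((Z\cap\{0,\dots,m-2\})+1\big)$, where $Z$ is the missing-index set of $S$. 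Hence $\Psi$ raises the number of missing top differences by exactly $1$, unless the smallest top difference $m$ (index $j=m-1$) was already missing for $S$, in which case the count is left unchanged.

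To assemble the bound, let $D=\{S:\text{exactly }k\text{ missing}\}$ and $D_1=\{S\in D:\ \text{difference }m\text{ is present}\}$. By the paragraph above $\Psi(D_1)\subseteq\{\text{exactly }k+1\text{ missing}\}$, and since $\Psi$ is at most $2$-to-$1$ we get $|\Psi(D_1)|\ge\tfrac12|D_1|$. The discarded part $D\setminus D_1$ lies in the event that difference $m$ is missing; as $m$ is realized by the $m$ disjoint pairs $\{(y,y+m):0\le y\le m-1\}$, that event has probability $\tvfp^{m}$. Dividing by $2^{2m}$ gives
\[
f_{k+1}(m)\ \ge\ \tfrac12\big(f_k(m)-\tvfp^{m}\big),
\]
and $m\to\infty$ yields $\ell(2k+2)\ge\tfrac12\,\ell(2k)$.

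The hard part is the boundary term: one must confirm that $\Psi$ fails to add a missing difference \emph{only} when the least top difference $m$ is absent, and that this exceptional event has vanishing probability, which is precisely where the disjoint-pairs count and the factor $\tvfp^{m}$ enter. It is worth stressing that the constant $\tfrac12$ is sharp for this method: it comes exactly from forgetting the single coordinate $m\in S$. Any alternative that instead enlarges the universe by two elements in order to produce a genuinely new top difference costs a factor $2^2$ and delivers only $\tfrac14$, so working inside the fixed universe $[2m]$ and discarding precisely one coordinate is what makes the correct factor $\tfrac12$ come out.
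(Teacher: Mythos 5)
Your argument is correct, but it reaches the inequality by a noticeably longer road than the paper does. Both proofs rest on the same intuition --- vacating the top element of the universe costs a factor of $\oovt$ and forces one additional missing (pair of) difference(s) --- but the paper implements this in one line at the level of $\pnmz$ itself: conditioning on $n-1\notin S$ identifies $S$ with a uniform subset of $[n-1]$; the extreme differences $\pm(n-1)$ are then automatically absent while nothing else changes, giving the exact inequality $\pnm{2k+2}\ \ge\ \oovt\,\pnmb{n-1}(2k)$ with no error term, and letting $n\to\infty$ finishes. Your route instead descends to the fringe model $f_k(m)$, builds the shift map $\Psi$, verifies the convolution identity $c'_0=0$, $c'_j=c_{j-1}$, and must then control the boundary event that the least top difference $m$ is already absent, which is where the $\tvfp^{m}$ term enters. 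Those steps all check out: the $2$-to-$1$ property of $\Psi$ onto $\{S':2m-1\notin S'\}$, the identification of the missing-index set of $\Psi(S)$ with $\{0\}\cup\big((Z\cap\{0,\dots,m-2\})+1\big)$, the disjoint-pairs bound $P(m\notin S-S)=\tvfp^{m}$ for $S\subseteq[2m]$, and the passage to the limit via Proposition \ref{thm:lbound} are all valid. What your version costs is the boundary analysis and the error term, both artifacts of working inside the fixed universe $[2m]$ rather than shrinking the universe by one; what it buys is the structural observation that the top $m$ differences of $S\subseteq[2m]$ are governed by the convolution of the two fringe indicator sequences, which is a genuinely useful way to think about $f_k$ and $g_k$ (and the computations in Appendix \ref{app:code}) even though it is overkill for this particular lemma.
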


\begin{proof} We have
\eq{
{}&\pnm{2k+2}\nonumber\\
\ = \ &P(|S-S|=2n-1-2(k+1))\nonumber\\
\ \ge\ &P(n-1\notin S\ \land\  |(S-S)\cap\{-n+2,\dots,n-2\}|=2n-1-2(k+1))\nonumber\\
\ = \ &P(n-1\notin S)\cdot P_{S\subseteq [n-1]}(|S-S|=2(n-1)-1-2k)\nonumber\\
\ = \ &\oovt\pnmb{n-1}(2k).
}
Note the left and right hand sides converge to $\ell(2k+2)$ and $\ell(2k)/2$ respectively.
\end{proof}

\begin{cor}\label{thm:pnmevenpos}
For all $k\ge 0$, $\lin\pnm{2k}>0$.
\end{cor}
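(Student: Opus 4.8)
The plan is to reduce the corollary to the single base case $\ell(0)>0$ and then propagate upward. Indeed, Lemma \ref{thm:lovt} gives $\ell(2k)\ge\ell(2k-2)/2$ for every $k\ge 1$, so an immediate induction yields $\ell(2k)\ge\ell(0)/2^{k}$; hence once we know $\ell(0)>0$, every $\ell(2k)$ is positive and we are done. It is worth noting that the normalization $\sum_i\ell(i)=1$ from Claim \ref{thm:pnmlim} only forces \emph{some} $\ell(2k_0)$ to be positive, and since Lemma \ref{thm:lovt} transfers positivity only to larger indices, it cannot by itself reach $\ell(0)$. So establishing the base case is genuinely necessary, and it is the entire content of the proof.

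To prove $\ell(0)>0$ I will bound the pre-limit probabilities $\pnm 0=P(|S-S|=2n-1)$ below by a positive constant for all large $n$; since this limit exists by Claim \ref{thm:pnmlim}, the bound passes to $\ell(0)$. The obstacle that must be respected is that a naive union bound over the missing differences diverges: each extreme difference $d$ near $n-1$ has only a constant chance of lying in $S-S$, since Lemma 11 of \cite{MO} bounds $P(d\notin S-S)$ by $\tvfp^{\,n-d}$, which is $\Theta(1)$ when $d$ is within $O(1)$ of $n-1$; summing over the $\Theta(1)$ extreme differences does not tend to $0$. The remedy is to dispose of the finitely many extreme differences by hand through a favorable fixed fringe configuration, and to union-bound only over the remaining ``bulk'' differences, whose miss-probabilities are summable.

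Concretely, I would fix a constant $t$ with $4\tvfp^{2t}<\oovt$ and let $E$ be the event that $\{0,\dots,t-1\}\subseteq S$ and $\{n-t,\dots,n-1\}\subseteq S$, so $P(E)=2^{-2t}$ once $n>2t$. On $E$ the left block realizes every difference in $\{1,\dots,t-1\}$, while the cross differences $(n-1-i)-j$ with $0\le i,j\le t-1$ realize every difference in $\{n-2t+1,\dots,n-1\}$; thus the only differences that can be absent are the bulk differences $d\in\{t,\dots,n-2t\}$. For each such $d$ I would replace the conditional miss-probability by the unconditional one via the elementary coupling $S\mapsto S^{E}:=S\cup\{0,\dots,t-1\}\cup\{n-t,\dots,n-1\}$: the set $S^{E}$ has exactly the law of $S$ conditioned on $E$ and contains $S$, so $S-S\subseteq S^{E}-S^{E}$ and hence $P(d\notin S-S\mid E)\le P(d\notin S-S)$. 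Lemma 11 of \cite{MO} then gives $\su_{d=t}^{n-2t}P(d\notin S-S)\le \novt\,\tvfp^{\,n/3}+4\tvfp^{2t}<\oovt+o(1)$, so for all large $n$,
\[
P(|S-S|=2n-1)\ \ge\ P(E)\Bigl(1-\su_{d=t}^{n-2t}P(d\notin S-S\mid E)\Bigr)\ \ge\ 2^{-2t}\bigl(\oovt-o(1)\bigr)\ >\ 0,
\]
whence $\ell(0)\ge 2^{-2t-2}>0$. The upward induction from the first paragraph then finishes the proof.

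The hard part, as indicated, is precisely the non-summability of the extreme miss-probabilities. The whole design of the argument — isolating a bounded fringe window, forcing it into $S$ to kill the extreme differences at a fixed cost $P(E)$, and then using the monotone coupling so that Lemma 11 of \cite{MO} needs to control only the bulk — exists to circumvent this single difficulty; everything else is bookkeeping and the trivial induction.
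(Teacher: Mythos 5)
Your proof is correct, and it is in fact more complete than what the paper records. The paper states this corollary immediately after Lemma \ref{thm:lovt} with no proof, the implicit argument being that $\sum_i \ell(i)=1$ (Claim \ref{thm:pnmlim}) forces some $\ell(2k_0)>0$ and that Lemma \ref{thm:lovt} then propagates positivity. You are right that this propagation only runs toward larger indices, so the implicit argument cannot by itself reach $\ell(0)$: the constraints $\ell(2k+2)\ge \ell(2k)/2$, $\ell\ge 0$, $\sum_k \ell(2k)=1$ are perfectly consistent with $\ell(0)=0$ (e.g.\ $\ell(0)=0$, $\ell(2k)=2^{-k}$ for $k\ge 1$), so a base case genuinely must be supplied. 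Your base case is sound: forcing the fringes $\{0,\dots,t-1\}$ and $\{n-t,\dots,n-1\}$ into $S$ at fixed cost $2^{-2t}$ disposes of exactly the differences whose miss-probabilities are not summable; the coupling $S\mapsto S\cup F$ is monotone and reproduces the conditional law, so Lemma 11 of \cite{MO} applies unconditionally to the remaining bulk, whose contribution is $4\tvfp^{2t}+o(1)<\tfrac12$ for, say, $t=4$. This is essentially the standard fringe argument of \cite{MO} and \cite{Zh2} for positive limiting probabilities, and together with the induction $\ell(2k)\ge \ell(0)/2^k$ it proves the corollary in full. The only alternative route the paper itself offers is indirect: the later rigorous numerics (Proposition \ref{thm:lbound} together with the computations behind Theorem \ref{thm:lreal}) give $\ell(0)>0.12165$, but that is computational and appears after the corollary, whereas your argument is self-contained and elementary.
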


Compared with the distribution of having-differences (Claim \ref{thm:pnhlim}), this shows that the direction we view matters. We see non-zero limits at this end. 

\begin{rek}
By Remark \ref{thm:TT},\begin{alignat}{3}\pnmb{36}(0)\ = \ {}&& \frac{8342197304}{2^{36}}&\ \approx \  0.1214,\nonumber\\ \pnmb{36}(2)\ = \ {}&&\frac{12668987317
}{2^{36}}&\ \approx \  0.1843,\nonumber\\ \pnmb{36}(4)\ = \ {}&& \frac{12894355828}{2^{36}}&\ \approx \  0.1876, \nonumber\\ \pnmb{36}(6)\ = \ {}&& \frac{10879185718}{2^{36}}&\ \approx \  0.1583, \nonumber\\ \pnmb{36}(8)\ = \ {}&& \frac{8208838614}{2^{36}}&\ \approx \  0.1195.\end{alignat} This gives us a sensible (but not rigorous) estimate of $\ell(k)$.
\end{rek}

We do have a rigorous bound of $\ell(k)$, in view of the proof of Claim \ref{thm:pnmlim}.

\begin{prop}\label{thm:lbound}
For all $m>k$, $\left|\ell(2k)-f_k(m)\right|\le 4(\tovf)^{m+1}$.
\end{prop}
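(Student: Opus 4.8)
The plan is to recognize that Proposition~\ref{thm:lbound} follows directly from the estimates already assembled in the proof of Claim~\ref{thm:pnmlim}, simply by letting $n\to\infty$ with $m$ and $k$ held fixed. First I would recall two facts established there, valid for every $n>2m$: the triangle-inequality bound $\left|\pnm{2k}-f_k(m)\right|\le P(\{0,\dots,n-m-1\}\subsetneq S-S)$, which uses the identity $P(|(S-S)\cap\{n-m,\dots,n-1\}|=m-k)=f_k(m)$; and the explicit tail estimate $P(\{0,\dots,n-m-1\}\subsetneq S-S)<\tvfp^{n/3}\cdot\novt+4\tvfp^{m+1}$, obtained by applying Lemma~11 of \cite{MO} to each potentially-missing small difference. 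Chaining these gives $\left|\pnm{2k}-f_k(m)\right|<\tvfp^{n/3}\cdot\novt+4\tvfp^{m+1}$ for all $n>2m$.

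The key observation is that $f_k(m)$ is independent of $n$, so as $n\to\infty$ the left-hand side converges to $\left|\ell(2k)-f_k(m)\right|$, the limit $\ell(2k)$ itself being guaranteed by Claim~\ref{thm:pnmlim}. On the right-hand side the term $\tvfp^{n/3}\cdot\novt$ tends to $0$ since exponential decay dominates the linear factor $\novt$, while $4\tvfp^{m+1}$ is already constant in $n$. Passing to the limit therefore yields $\left|\ell(2k)-f_k(m)\right|\le 4\tvfp^{m+1}$, which is exactly the assertion. Note that, unlike in Claim~\ref{thm:pnmlim}, no auxiliary $\epsilon$ is introduced here: taking the genuine limit eliminates the slack that was previously used only to exhibit a Cauchy sequence.

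I do not expect any real obstacle, since all the analytic work---controlling the probability that some small difference is absent from $S-S$---was carried out for Claim~\ref{thm:pnmlim}. The only points requiring care are to confirm that the bound $\left|\pnm{2k}-f_k(m)\right|\le P(\{0,\dots,n-m-1\}\subsetneq S-S)$ holds as a clean inequality for all $n>2m$ (which is immediate from the set-theoretic equivalences in that proof) and to keep the slowly-decaying term $\tvfp^{n/3}\cdot\novt$ explicit rather than absorbing it into $\epsilon$, so that its vanishing in the limit produces the sharp constant $4\tvfp^{m+1}$.
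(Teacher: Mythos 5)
Your proposal is correct and follows essentially the same route as the paper, which simply passes to the limit $n\to\infty$ in the inequality $\left|\pnm{2k}-f_k(m)\right|<\epsilon+4\tvfp^{m+1}$ from the proof of Claim~\ref{thm:pnmlim} (your version just keeps the vanishing term $\tvfp^{n/3}\cdot\novt$ explicit instead of absorbing it into $\epsilon$). No gaps.
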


\begin{proof}
Replace $\pnm{2k}$ by $\ell(2k)$ in equation (\ref{eq:pnmbound}).
\end{proof}

One would like to use this fact to prove Theorem \ref{thm:l10}, since $f_k(m)$ is finitely computable. Unfortunately this quickly becomes unrealistic because it takes $4^{m}m^2$ computations to exhaustively determine $f_k(m)$, and to reduce the uncertainty to $(0.1876-0.1843)/2$ we should have $m\ge 27$. In 2019, it took our laptop\footnote{CPU: i7-6500U @ 2.5GHz, RAM: 8GB} around 5 minutes to run $m=17$ with this method, and thus it would need around 25.2 years to computationally verify the theorem. We thus need a better approach, which we describe below.

\subsection{Using Conditional Probabilities}   

\begin{lem}\label{thm:localis} The conditional probability of $k\not\in S-S$, given that $0, n-1\in S$, is bounded by the following:
\be\threecase
{P\left(k\notin S-S\cht\right)\ }
{\ = \ 0}{k=n-1}
{\ = \ \fovn\cdot\tvfp^{n-k}}{\novt\le k<n-1}
{\ \le\ \fovn\cdot\tvfp^{\frac{n}{3}}}{0\le k<\novt.}\ee
\end{lem}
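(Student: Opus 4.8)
The plan is to condition on the forced elements $0,n-1\in S$ and decompose $[n]$ into the $k$ arithmetic-progression chains of common difference $k$, namely $C_r=\{r,r+k,r+2k,\dots\}\cap[n]$ for $0\le r<k$. Any two elements of $S$ differing by $k$ are consecutive inside a single chain, so the event $\{k\notin S-S\}$ is precisely the intersection over $r$ of the events $E_r=\{\text{no two consecutive members of }C_r\text{ lie in }S\}$. Since distinct chains involve disjoint coordinates, the $E_r$ stay independent after conditioning, giving $P\left(k\notin S-S\cht\right)=\prod_r P(E_r\mid 0,n-1\in S)$. The number of independent sets of a path on $L$ vertices is the Fibonacci number $F_{L+2}$, so a \emph{free} chain (meeting neither $0$ nor $n-1$) of length $L$ contributes $F_{L+2}/2^L$, while a chain with one endpoint forced into $S$ contributes $F_L/2^{L-1}$. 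Two cases are immediate: for $k=n-1$ the only distance-$k$ pair is $\{0,n-1\}\subseteq S$, so $k\in S-S$ surely; and $k=0$ gives $0=0-0\in S-S$ surely.

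For $\novt\le k<n-1$ every chain has at most two vertices, and the distance-$k$ pairs are exactly the two-element chains $\{b,b+k\}$, $0\le b\le n-1-k$, of which there are $n-k$. Exactly two meet the forced points, $\{0,k\}$ and $\{n-1-k,n-1\}$, and these are disjoint because $0<n-1-k<k<n-1$ once $k\ge\novt$. On each of these two, $E_r$ requires the single free vertex to avoid $S$ (probability $\oovt$), while each of the other $n-k-2$ free chains contributes $\tvfp$. The product is $\oovf\,\tvfp^{\,n-k-2}=\fovn\,\tvfp^{\,n-k}$, which is the claimed equality.

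For $0\le k<\novt$ every chain has length at least $2$ (its second vertex is $r+k\le 2k-1<n$), and this is exactly what lets a clean exponential-in-length estimate go through. I would bound a free chain by $\tvfp^{\,L/3}$ and a singly-forced chain by $\tfrac23\,\tvfp^{\,L/3}$; using $\tvfp^{\,L/3}=6^{L/3}/2^L$ these reduce to the Fibonacci inequalities $F_{L+2}\le 6^{L/3}$ and $F_L\le\tfrac13\,6^{L/3}$, both valid for $L\ge2$ (check small $L$, then compare growth rates, since $6^{1/3}=1.817\ldots$ exceeds the golden ratio). When $k\nmid n-1$, the points $0$ and $n-1$ occupy two different singly-forced chains and the lengths sum to $n$, so the product is at most $\left(\tfrac23\right)^2\tvfp^{\,n/3}=\fovn\,\tvfp^{\,n/3}$; here no lower bound on $n$ is needed.

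The delicate case is $k\mid n-1$, when $0$ and $n-1$ lie in one doubly-forced chain of length $L\ge3$. For $L\ge4$ that chain contributes $F_{L-2}/2^{L-2}$, and the bound $F_{L-2}\le\tfrac19\,6^{L/3}$ again yields at most $\fovn\,\tvfp^{\,L/3}$, so the estimate survives. Only $L=3$, i.e. $k=(n-1)/2$, resists the per-chain argument, since that chain alone contributes $\oovt$; here one must instead spend the slack in the $k-1$ free two-element chains, comparing the exact value $\oovt\,\tvfp^{\,k-1}$ directly with $\fovn\,\tvfp^{\,n/3}$, which holds once $n$ is not too small. This tension between the forced endpoints and the length budget is the main obstacle; the remainder is bookkeeping and elementary Fibonacci estimates.
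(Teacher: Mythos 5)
Your argument is correct, and it takes a genuinely different route from the paper. The paper never computes the third case exactly: it picks a mutually disjoint subfamily $D'$ of the distance-$k$ pairs with $|D'|\ge n/3$ and $0,n-1\in\bigcup D'$, and upper-bounds the conditional probability by $\prod_{p\in D'}\left(1-2^{-|p\setminus\{0,n-1\}|}\right)=\fovn\cdot\tvfp^{|D'|}$; the middle case of the lemma is precisely the situation where $D$ itself is such a family, giving equality. Your decomposition into residue-class chains instead produces the \emph{exact} conditional probability as a product of Fibonacci ratios, which you then control with the $6^{L/3}$ estimates. The extra precision buys something real: it exposes the edge case $k=(n-1)/2$, where both forced elements sit in the single three-vertex chain $\{0,k,n-1\}$, so no matching of distance-$k$ pairs can cover both $0$ and $n-1$ -- this is exactly where the paper's ``without loss of generality $0,n-1\in\bigcup D'$'' cannot actually be arranged. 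There the exact value is $\oovt\tvfp^{k-1}$, which exceeds $\fovn\cdot\tvfp^{n/3}$ for $n=3,5,7,9,11$; so the stated inequality genuinely fails for those $n$, your caveat ``once $n$ is not too small'' is necessary rather than an artifact of your method, and a clean fix is to add the hypothesis $n\ge 13$ (or enlarge the constant). Since the lemma is only invoked in limits as $n\to\infty$, nothing downstream is affected, but your version of the proof is the one that makes the required threshold explicit.
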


\begin{proof}
For all $k < n$ let $D:=\left\{\{a,b\}: a,b\in [n], |a-b|=k\right\}$. We say $D'\subseteq D$ is \textit{mutually disjoint} if $\forall p_1,p_2\in D'$, $p_1\cap p_2=\emptyset$. If $D'\subseteq D$ is mutually disjoint and $0,n-1\in\bigcup D'$ (the union is over all the pairs in $D'$), then
\eq{
    P\left(k\notin S-S\cht\right)
    &\ = \ P\left(D\cap\mathcal{P}(S) = \emptyset\cht\right)\nonumber\\
    &\ \le\ P\left(D'\cap\mathcal{P}(S) = \emptyset\cht\right)\nonumber\\
    &\ = \ \prod_{p\in D'}{\left(1-2^{-|p\setminus\{0,n-1\}|}\right)}\nonumber\\
    &\ = \ \twocase{}{0}{k=n-1}{\fovn\cdot\tvfp^{|D'|}}{0\le k<n-1.}
}

When $2k>n-1$, $D$ is already mutually disjoint and has size $n-k$; otherwise, we can find a mutually disjoint $D'$ with $|D'|\ge n/3$, and let $0,n-1\in\bigcup D'$ without loss of generality. We hence conclude the lemma.
\end{proof}

The conditional probability distribution requiring $0,n-1\in S$ is compared with the usual probability distribution without such restriction. We define similar notions to $\pnmz, f_k$.

\begin{defi} Let
\eqn{\qnmb{n}(k)& \ := \ P\left(\misk\cht\right);\nonumber\\  g_k(m)& \ :=\ P_{S\subseteq [2m]}\left(|(S-S)\cap \{m, \dots, 2m-1\}|\ = \ m-k\cht[0,2m-1\in]\right).}
\end{defi}

\begin{prop}
$\forall k\ge 0, \forall m>k,\forall \epsilon>0$ and for sufficiently large $n$, \[\left|\qnmb{n}(2k)-g_k(m)\right| \ < \ \epsilon+\frac{16}{9}\cdot\tvfp^{m+1}.\]
\end{prop}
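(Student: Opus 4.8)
The plan is to transcribe the proof of Claim~\ref{thm:pnmlim} almost verbatim, carrying the conditioning event $0,n-1\in S$ through every step and invoking Lemma~\ref{thm:localis} in place of Lemma 11 of \cite{MO}. First I would observe that the purely set-theoretic equivalence at the start of that proof is untouched by conditioning: for $n>2m$, whenever $\{0,\dots,n-m-1\}\subseteq S-S$ one has $|(S-S)\cap\{n-m,\dots,n-1\}|=m-k \ifff |S-S|=2n-1-2k$. Conditioning both sides on $0,n-1\in S$ then yields
\[
\left|\qnmb{n}(2k) - P\!\left(|(S-S)\cap\{n-m,\dots,n-1\}|=m-k\cht\right)\right| \ \le\ P\!\left(\{0,\dots,n-m-1\}\subsetneq S-S\cht\right),
\]
so it remains to pin down the main term and bound the error term.

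Next I would identify the main term with $g_k(m)$. Exactly as in Claim~\ref{thm:pnmlim}, any difference landing in $\{n-m,\dots,n-1\}$ must arise from $x\in S\cap\{n-m,\dots,n-1\}$ and $y\in S\cap\{0,\dots,m-1\}$, so the event depends only on the two independent fringes. The conditioning $0\in S$ lives entirely in the left fringe and $n-1\in S$ entirely in the right fringe; shifting the right fringe down by $n-2m$ onto $\{m,\dots,2m-1\}$ and relabelling assembles the two fringes into a uniform subset of $[2m]$ conditioned on containing $0$ and $2m-1$. Hence the main term equals $g_k(m)$, which is constant in $n$.

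The remaining work is the error term. By the union bound and then Lemma~\ref{thm:localis},
\[
P\!\left(\{0,\dots,n-m-1\}\subsetneq S-S\cht\right) \ \le\ \su_{i=0}^{n-m-1}P\!\left(i\notin S-S\cht\right) \ \le\ \su_{i=0}^{\lfloor\novt\rfloor-1}\fovn\tvfp^{n/3} + \su_{i=\lfloor\novt\rfloor}^{n-m-1}\fovn\tvfp^{n-i}.
\]
The first sum is at most $\tfrac{n}{2}\cdot\fovn\tvfp^{n/3}\to 0$, hence below $\epsilon$ once $n$ is large; reindexing the second by $j=n-i$ bounds it by $\fovn\su_{j\ge m+1}\tvfp^{j}=\fovn\cdot 4\cdot\tvfp^{m+1}=\tfrac{16}{9}\tvfp^{m+1}$. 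Combining the three displays gives the claimed inequality for sufficiently large $n$.

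Since this is a routine adaptation, there is no genuine obstacle; the one point demanding care is the factorisation of the conditioning event through the fringe decomposition, i.e.\ checking that requiring $0\in S$ and $n-1\in S$ translates cleanly into requiring $0\in S$ and $2m-1\in S$ in the $[2m]$-model defining $g_k(m)$. As a consistency check, the improvement of the constant from the $4\tvfp^{m+1}$ of Claim~\ref{thm:pnmlim} to $\tfrac{16}{9}\tvfp^{m+1}$ here is precisely the extra factor $\fovn$ that Lemma~\ref{thm:localis} provides over Lemma 11 of \cite{MO}.
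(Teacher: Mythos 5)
Your proposal is correct and is exactly the argument the paper intends: the paper's own proof is just the remark that one repeats the proof of Claim~\ref{thm:pnmlim} verbatim under the conditioning $0,n-1\in S$, with Lemma~\ref{thm:localis} supplying the extra factor of $\fovn$ in the error term, which is precisely what you carried out (including the correct identification of the relabelled fringes with the $[2m]$-model defining $g_k(m)$).
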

\begin{proof}
This follows from an analagous argument as in Claim \ref{thm:pnmlim}. By Lemma \ref{thm:localis}, the uncertainty is 4/9 the original one.
\end{proof}

\begin{defi} We have
$j(k):=\lin \qnmb{n}(k)$.
\end{defi}

\begin{prop}\label{thm:jbound} Note
$j(k)$ is well-defined; in addition, for all $m>k$ we have $$ |j(2k) - g_k(m)| \ < \ \frac{16}{9} \tvfp^{m+1}. $$
\end{prop}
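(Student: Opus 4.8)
The plan is to establish Proposition \ref{thm:jbound} in two stages, mirroring the passage from Claim \ref{thm:pnmlim} to Proposition \ref{thm:lbound} but now in the conditional setting. First I would settle the well-definedness of $j(k)=\lin\qnmb{n}(k)$. For even indices the preceding proposition already supplies, for every $m>k$ and every $\epsilon>0$, the estimate $|\qnmb{n}(2k)-g_k(m)|<\epsilon+\frac{16}{9}\tvfp^{m+1}$ valid for all large $n$; letting $m$ grow and $\epsilon$ shrink shows $\{\qnmb{n}(2k)\}_n$ is Cauchy, exactly as in Claim \ref{thm:pnmlim}, so the limit $j(2k)$ exists. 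For odd indices the limit is trivially $0$: since $|S-S|$ is always $0$ or odd, $2n-1-|S-S|$ is even unless $|S-S|=0$, and conditioning on $0,n-1\in S$ forces $|S|\ge 2$ and hence $|S-S|\ge 3$; thus $\qnmb{n}(k)=0$ for odd $k$ and all $n\ge 2$, so $j(k)=0$.

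Second, for the quantitative bound I would pass to the limit in the preceding proposition. Fixing $m>k$ and taking $n\to\infty$ in $|\qnmb{n}(2k)-g_k(m)|<\epsilon+\frac{16}{9}\tvfp^{m+1}$ gives $|j(2k)-g_k(m)|\le\epsilon+\frac{16}{9}\tvfp^{m+1}$, and since $\epsilon>0$ is arbitrary this yields the non-strict bound $|j(2k)-g_k(m)|\le\frac{16}{9}\tvfp^{m+1}$. This is the precise analogue of Proposition \ref{thm:lbound}, the factor $4$ being replaced by $\frac{16}{9}$ because Lemma \ref{thm:localis} shows the conditional one-element miss probability is $\frac{4}{9}$ of the unconditional one.

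The remaining, genuinely delicate point is upgrading ``$\le$'' to the strict ``$<$'' in the statement (contrast Proposition \ref{thm:lbound}, which only claims ``$\le$''). The slack comes from the geometric tail: tracing the conditional version of the estimate in Claim \ref{thm:pnmlim}, the finite-$n$ error is bounded by $\frac{4}{9}$ times a vanishing boundary term $\frac{n}{2}\tvfp^{n/3}$ plus a partial sum $\sum_{j=m+1}^{\lfloor\novt\rfloor}\tvfp^{j}$, and this partial sum is \emph{strictly} below its limiting value $4\tvfp^{m+1}$ for every finite $n$. A naive limit of strict inequalities only returns ``$\le$'', so I would instead run the estimate at level $m+1$, whose tolerance $\frac{16}{9}\tvfp^{m+2}$ is strictly smaller, and combine it with a direct comparison of the fixed constants $g_k(m)$ and $g_k(m+1)$: if one can show $|g_k(m+1)-g_k(m)|<\frac{4}{9}\tvfp^{m+1}$, then the triangle inequality gives $|j(2k)-g_k(m)|<\frac{16}{9}\tvfp^{m+2}+\frac{4}{9}\tvfp^{m+1}=\frac{16}{9}\tvfp^{m+1}$, as required. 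Establishing this strict comparison between consecutive window scales—plausibly via a top-element removal argument in the spirit of Lemma \ref{thm:lovt}—is the main obstacle; once it is in hand, every other ingredient is a direct transcription of the arguments already proved for $\pnmz$ and $f_k$.
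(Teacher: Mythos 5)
Your first two stages are exactly the paper's (very terse) argument: the paper proves this proposition by the same one-line reduction it used for $\ell$, namely ``let $n\to\infty$ and then use the arbitrariness of $\epsilon$ in the preceding proposition,'' which simultaneously gives the Cauchy property (hence well-definedness of $j(2k)$) and the numerical bound. So up to the point where you obtain $|j(2k)-g_k(m)|\le\frac{16}{9}\tvfp^{m+1}$, you have reproduced the intended proof.

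On the strict inequality: you are right that the limiting argument only yields ``$\le$,'' and it is worth noting that the paper makes no attempt to do better --- the companion statement (Proposition \ref{thm:lbound}) is stated with ``$\le$,'' and the ``$<$'' here is best read as an inconsistency of no consequence, since every later use of this proposition (Lemma \ref{thm:rigboundl}, Theorem \ref{thm:lreal}) has ample numerical slack. Your proposed repair, however, should not be presented as nearly complete: it hinges on the unproven claim $|g_k(m+1)-g_k(m)|<\frac{4}{9}\tvfp^{m+1}$, and this does \emph{not} follow from anything already established. The triangle inequality applied to the two bounds you do have gives only $|g_k(m+1)-g_k(m)|\le\frac{16}{9}\tvfp^{m+1}+\frac{16}{9}\tvfp^{m+2}=\frac{28}{9}\tvfp^{m+1}$, which is seven times too weak, and a ``top-element removal'' in the style of Lemma \ref{thm:lovt} compares $\pnmb{n-1}(k-2)$ with $\pnmb{n}(k)$ (shifting the index $k$), not $g_k(m)$ with $g_k(m+1)$ at fixed $k$, so it is not the right tool either. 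If you want strictness honestly, a cleaner route is to sharpen the error estimate inside the proof of Claim \ref{thm:pnmlim} itself: the geometric tail there is a \emph{finite} sum $\sum_{j=m+1}^{n-\lfloor n/2\rfloor}\tvfp^{j}=4\tvfp^{m+1}\left(1-\tvfp^{n-\lfloor n/2\rfloor-m}\right)$, but since the deficit vanishes as $n\to\infty$ one still only recovers ``$\le$'' in the limit; genuine strictness would require exploiting the slack in the union bound, which is more work than the statement is worth. Either drop that third stage and state the bound with ``$\le$,'' or supply an actual proof of the comparison you need.
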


\begin{proof}
The proof is similar to that of Proposition \ref{thm:lbound}.
\end{proof}


\begin{lem}
For \kitn, \[\ell(k)\ = \ \frac{j(k)}{4}+\ell(k-2)-\frac{\ell(k-4)}{4}.\]
\end{lem}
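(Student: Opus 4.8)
The plan is to isolate the two extreme points $0$ and $n-1$ of $[n]$ by conditioning on their membership in $S$, thereby interpolating between the unconditioned limit $\ell$ and the doubly-conditioned limit $j$ through an auxiliary \emph{singly}-conditioned quantity. Accordingly I introduce
\[
R_n(k)\ :=\ P\!\left(\misk\cht[0\in]\right),
\]
the probability of missing $k$ differences given only that $0\in S$; by the reflection $x\mapsto n-1-x$ it equals the same probability conditioned instead on $n-1\in S$. The endgame is two exact finite-$n$ recursions, one expressing $\pnm k$ via $R_n$ and one expressing $R_n$ via $\qnmb{n}$, followed by algebraic elimination of $R$ and a passage to the limit.

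The crux is a \emph{fringe-bookkeeping} observation. If $n-1\notin S$ then $S\subseteq\{0,\dots,n-2\}$, so the extreme symmetric pair $\pm(n-1)$ is automatically absent from $S-S$, while every difference $d$ with $|d|\le n-2$ is realized precisely when it is realized inside the shorter interval $\{0,\dots,n-2\}$; hence \emph{missing $k$ differences in the $[n]$-world is equivalent to missing $k-2$ in the $[n-1]$-world}, and the conditional measure there is again that of a uniformly random subset. The identical accounting applies when $0\notin S$, where $S\subseteq\{1,\dots,n-1\}$ is, after translation, an unconstrained uniform subset of an interval of length $n-1$. Conditioning $\pnm k$ on $0\in S$ versus $0\notin S$ then gives the exact identity
\[
\pnm k\ =\ \tfrac12\,R_n(k)\ +\ \tfrac12\,\pnmb{n-1}(k-2),
\]
and conditioning $R_n(k)$ further on $n-1\in S$ versus $n-1\notin S$ gives
\[
R_n(k)\ =\ \tfrac12\,\qnmb{n}(k)\ +\ \tfrac12\,R_{n-1}(k-2),
\]
since the first case is exactly the doubly-conditioned event counted by $\qnmb{n}$, while the second again shortens the interval by one but keeps the constraint $0\in S$ (so it is an $R$-term, not a $Q$- or $P^{\mathrm M}$-term).

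The remaining step is purely algebraic. Solving the first identity for $R_n(k)=2\pnm k-\pnmb{n-1}(k-2)$ and substituting this, together with its shift $R_{n-1}(k-2)=2\,\pnmb{n-1}(k-2)-\pnmb{n-2}(k-4)$, into the second identity makes $R$ cancel completely, leaving the exact recursion
\[
\pnm k\ =\ \tfrac14\,\qnmb{n}(k)\ +\ \pnmb{n-1}(k-2)\ -\ \tfrac14\,\pnmb{n-2}(k-4).
\]
Letting $n\to\infty$, the four terms converge to $\ell(k)$, $\tfrac14 j(k)$, $\ell(k-2)$, and $\tfrac14\ell(k-4)$ respectively — by Claim~\ref{thm:pnmlim} for the $P^{\mathrm M}$-terms (the shifts $n-1,n-2\to\infty$ do not affect the limits) and by the definition of $j$ with Proposition~\ref{thm:jbound} for the $Q^{\mathrm M}$-term — which yields the asserted formula. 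A pleasant feature of this route is that it never requires $\lim_n R_n(k)$ to exist: $R$ is eliminated while $n$ is finite.

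The main obstacle is the fringe bookkeeping underlying the two recursions: one must check that dropping a single endpoint removes \emph{exactly} the symmetric pair $\pm(n-1)$ — so the missing-count shifts by $2$ rather than $1$ — that no other difference is disturbed, and that each conditioned subproblem is distributed as a uniform random subset of the correct shorter interval, so that it is genuinely governed by $\pnmb{n-1}$, $R_{n-1}$, or $\qnmb{n}$. For small even $k$ one should additionally adopt the convention $\pnmb{m}(t)=\ell(t)=0$ for $t<0$, under which both recursions and the final identity remain valid.
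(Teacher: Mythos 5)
Your proof is correct and essentially matches the paper's: both condition on the membership of the endpoints $0$ and $n-1$, identify the resulting conditional probabilities with $\qnmb{n}(k)$, $\pnmb{n-1}(k-2)$ and $\pnmb{n-2}(k-4)$ via the same fringe bookkeeping, and arrive at the identical finite-$n$ identity $\pnm k=\tfrac14\qnmb{n}(k)+\pnmb{n-1}(k-2)-\tfrac14\pnmb{n-2}(k-4)$ before passing to the limit. The only difference is presentational: you condition in two stages through the auxiliary $R_n$ and then eliminate it algebraically, whereas the paper writes the same decomposition directly as a single four-term inclusion--exclusion over the events $0\in S$ and $n-1\in S$.
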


\begin{proof}
\eq{
    \pnm k
    &\ = \ P(\misk)\nonumber\\
        &\ = \ \oovf P\left(\misk\cht\right)\nonumber\\
        &\ \ \ \ \ \ + \oovt P\left(\misk\cht[0\notin]\right)\nonumber\\
        &\ \ \ \ \ \ + \oovt P\left(\misk\cht[n-1\notin]\right) \nonumber\\
        &\ \ \ \ \ \  - \oovf P\left(\misk\cht[0,n-1\notin]\right)\nonumber\\
    &\ = \ \oovf \qnmb{n}(k) + \oovt \pnmb{n-1}(k-2) + \oovt \pnmb{n-1}(k-2) - \oovf\pnmb{n-2}(k-4).
}
The left and right hand sides converge to $\ell(k)$ and $\frac{j(k)}{4}+\ell(k-2)-\frac{\ell(k-4)}{4}$ respectively.
\end{proof}

\begin{cor}\label{thm:jform}
For \kitn, \[j(k)\ = \ 4\ell(k)-4\ell(k-2)+\ell(k-4),\tand \ell(k)\ = \ \sui{0}\frac{i+1}{2^{i+2}}j(k-2i).\]
\end{cor}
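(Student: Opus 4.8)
My plan splits along the two identities. The first is immediate: the preceding lemma asserts $\ell(k) = \tfrac{j(k)}{4} + \ell(k-2) - \tfrac{\ell(k-4)}{4}$ for even $k$, so I would simply multiply by $4$ and solve for $j(k)$ to obtain $j(k) = 4\ell(k) - 4\ell(k-2) + \ell(k-4)$. The genuine content is the second identity, which amounts to inverting this linear recurrence so that each $\ell(k)$ is written as an explicit combination of the $j$-values.

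Before inverting, I would fix the convention that $\ell(m) = j(m) = 0$ whenever $m < 0$. This is forced by the definitions, since missing a negative number of differences is impossible, so $\pnmb{n}(m)$ and $\qnmb{n}(m)$ vanish identically for $m<0$ and hence so do their limits. With this convention the lemma's recurrence holds for every even $k \ge 0$ with no separate base cases (for instance $\ell(0) = j(0)/4$), and it already determines $\ell$ completely in terms of $j$. It also shows the target sum is really finite, since $j(k-2i)=0$ once $2i > k$, so no convergence issue arises.

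The cleanest route is generating functions. Restricting to even indices, I would write $k=2n$, $a_n := \ell(2n)$, $b_n := j(2n)$, so the recurrence reads $a_n - a_{n-1} + \tfrac14 a_{n-2} = \tfrac14 b_n$. Multiplying by $x^n$ and summing over $n \ge 0$ (with $a_{-1}=a_{-2}=0$) gives $A(x)\,\bigl(1 - x + \tfrac14 x^2\bigr) = \tfrac14 B(x)$, where $A,B$ are the ordinary generating functions of $(a_n),(b_n)$. The key observation is the perfect-square factorization $1 - x + \tfrac14 x^2 = \bigl(1 - \tfrac12 x\bigr)^2$, so that
\[
A(x) \ = \ \frac{B(x)}{4\,\bigl(1-\tfrac12 x\bigr)^2} \ = \ B(x)\sum_{i=0}^\infty \frac{i+1}{2^{i+2}}\,x^i,
\]
using $\bigl(1-\tfrac12 x\bigr)^{-2} = \sum_{i\ge 0}(i+1)2^{-i}x^i$. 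Reading off the coefficient of $x^n$ yields $a_n = \sum_{i=0}^\infty \tfrac{i+1}{2^{i+2}}\, b_{n-i}$, which is exactly $\ell(k) = \sum_{i\ge0}\tfrac{i+1}{2^{i+2}}\, j(k-2i)$.

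If one prefers to avoid generating functions, the same formula can be verified directly by induction on even $k$: assuming it for $k-2$ and $k-4$, I would substitute into $\ell(k) = \tfrac{j(k)}4 + \ell(k-2) - \tfrac{\ell(k-4)}4$, reindex the two resulting sums, and check that the coefficient of $j(k-2i)$ collapses to $\tfrac{i+1}{2^{i+2}}$ for every $i$ (the cases $i=0,1$ by hand, and $\tfrac{i}{2^{i+1}} - \tfrac{i-1}{2^{i+2}} = \tfrac{i+1}{2^{i+2}}$ for $i\ge2$). There is no real obstacle here; the only things to get right are the boundary convention and spotting the square factorization $(1-\tfrac12 x)^2$, after which the rest is routine bookkeeping.
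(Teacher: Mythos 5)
Your proof is correct and matches the paper's (implicit) intent: the paper states this as an unproved corollary of the preceding lemma, and your two steps --- rearranging the recurrence for the first identity, and inverting it via the factorization $1-x+\tfrac14x^2=\bigl(1-\tfrac12x\bigr)^2$ for the second --- are exactly the routine verification being omitted. Your explicit attention to the boundary convention $\ell(m)=j(m)=0$ for $m<0$ is a worthwhile addition, since it is what makes the recurrence and the infinite sum well-posed at $k=0,2$.
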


\begin{cor}\label{thm:ldiff}
For \kitn, \[\ell(k)-\ell(k+2)\ = \ -\oovf j(k+2)+\sui{1}\frac{i}{2^{i+3}}j(k-2i).\]
\end{cor}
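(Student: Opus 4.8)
The plan is to derive this identity as a direct consequence of the closed form for $\ell(k)$ established in Corollary \ref{thm:jform}, namely $\ell(k)=\sui{0}\frac{i+1}{2^{i+2}}j(k-2i)$. Since $j$ vanishes at odd arguments and $k$ is even, every term in sight involves $j$ evaluated at an even index, so there is no parity mismatch; the whole computation is a matter of writing both $\ell(k)$ and $\ell(k+2)$ as series in the \emph{same} collection of values $j(k-2i)$ and then combining coefficients.

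First I would write $\ell(k+2)=\sui{0}\frac{i+1}{2^{i+2}}j(k+2-2i)$ and shift its summation index to line its terms up with those of $\ell(k)$. Peeling off the $i=0$ term isolates the contribution $\oovf\,j(k+2)$, and the substitution $i\mapsto i+1$ in the remaining sum rewrites it as $\sui{0}\frac{i+2}{2^{i+3}}j(k-2i)$. At this point both $\ell(k)$ and $\ell(k+2)$ are series over the index set $i\ge 0$ with general term a scalar multiple of $j(k-2i)$, so I can subtract termwise.

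Then I would compute, for each $i\ge 0$, the coefficient of $j(k-2i)$ in $\ell(k)-\ell(k+2)$, which is $\frac{i+1}{2^{i+2}}-\frac{i+2}{2^{i+3}}=\frac{2(i+1)-(i+2)}{2^{i+3}}=\frac{i}{2^{i+3}}$. The $i=0$ term therefore drops out, so the surviving sum may be taken from $i=1$ onward, and together with the leftover $-\oovf\,j(k+2)$ this yields exactly \[\ell(k)-\ell(k+2)=-\oovf\,j(k+2)+\sui{1}\frac{i}{2^{i+3}}j(k-2i),\] as claimed.

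There is essentially no serious obstacle here: the only points needing any care are the reindexing and the elementary arithmetic of merging the two coefficients, plus a one-line justification that the manipulations are legitimate. Since $0\le j(k)\le 1$ and the weights decay geometrically, both series converge absolutely, so termwise subtraction and the index shift are valid. I expect the write-up to be short, with the coefficient simplification $\frac{i+1}{2^{i+2}}-\frac{i+2}{2^{i+3}}=\frac{i}{2^{i+3}}$ being the single substantive step.
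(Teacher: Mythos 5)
Your derivation is correct and is exactly the intended route: the paper leaves this corollary's proof implicit, as an immediate consequence of the series formula $\ell(k)=\sui{0}\frac{i+1}{2^{i+2}}j(k-2i)$ from Corollary \ref{thm:jform}, and your index shift plus the coefficient simplification $\frac{i+1}{2^{i+2}}-\frac{i+2}{2^{i+3}}=\frac{i}{2^{i+3}}$ is the whole argument. The absolute-convergence remark justifying termwise subtraction is a nice (if routine) touch; nothing is missing.
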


\begin{rek}
It's better to focus on and compute the $j$ sequence than the $\ell$ sequence, for the following reasons.
\begin{itemize}
    \item Using the same value of $m$, estimating the $j$ sequence will produce less uncertainty than estimating the $\ell$ sequence. In view of Proposition \ref{thm:lbound} and Proposition \ref{thm:jbound}, given $f_k(m)$ and $g_k(m)$, which are finitely computable, $\ell(2k)$ is within $4\tvfp^{m+1}$ from $f_k(m)$, while $j(2k)$ is within only $\frac{16}{9}\tvfp^{m+1}$ from $g_k(m)$, reducing to a factor of $4/9$.

    \item When estimating $\ell(2)-\ell(4)$, which is the bottleneck difference regarding Theorem \ref{thm:obs3}, the uncertainty coming from the $j$ sequence would be further compressed while that from $\ell$ would be amplified. Say each term in the $j$ sequence has an uncertainty of $e$, then by Corollary \ref{thm:ldiff}, the uncertainty of $\ell(2)-\ell(4)$ is only $(\oovf+\frac{1}{16})e=5e/16$, whereas if we estimated the $\ell$ sequence honestly the uncertainty would be $2e$.\footnote{The bottleneck difference for Theorem \ref{thm:l10} is $\ell(0)-\ell(8)$, which would have uncertainty $73e/64$ under the $j$ method by Corollary \ref{thm:jform}, but $2e$ under the $\ell$ method.}

    \item What's more, it is 4x faster to compute $g_k(m)$ than $f_k(m)$ because the conditional probability reduces two degrees of freedom.
\end{itemize}
Approximately\footnote{This is a rough estimate: the computational complexities of $f_k(m)$ and $g_k(m)$ are both asymptotically $4^m\cdot m^2$, but when $m$ is decreased we only counted the boost coming from the $4^m$ factor, neglecting that from the quadratic term; also, $m$ is always an integer, so there are floor-and-ceiling errors.}, the $j$ method is $4^{\log_{3/4}(\fovn\cdot\frac{5}{32})}\times 4\approx 1527656$ times faster than the $\ell$ method to verify Theorem \ref{thm:obs3}, and $\approx 2981$ times faster to verify Theorem \ref{thm:l10}. One can divide the 25.2 years (mentioned earlier) by these numbers to see how everything is going to become feasible.
\end{rek}



Armed with these results, we are ready now to prove Theorem \ref{thm:l10}.

\subsection{Calculations and results}

\begin{calc}\label{thm:gkvalscalc}
The code in Appendix \ref{app:code} calculates the data in Table \ref{tab:gkvals}.
\end{calc}

\begin{table}[!ht]
    \centering\small
    \begin{tabular}{c r}
        $k$ & $g_k(23)$ \\ \hline
        0 & $8592305829704/2^{44}$ \\
        1 & $4442759682300/2^{44}$ \\
        2 & $2367846591103/2^{44}$ \\
        3 & $1174068145740/2^{44}$ \\
        4 & $559669653171/2^{44}$ \\
        5 & $256031157923/2^{44}$ \\
        6 & $114186380080/2^{44}$ \\
        7 & $49736070308/2^{44}$ \\
        8 & $21123843993/2^{44}$ \\
        9 & $8778930083/2^{44}$ \\
        10 & $3543398884/2^{44}$ \\
        11 & $1378772067/2^{44}$ \\
        12 & $508048560/2^{44}$ \\
        13 & $174732658/2^{44}$ \\
        14 & $54900922/2^{44}$ \\
        15 & $15344643/2^{44}$ \\
        16 & $3692910/2^{44}$ \\
        17 & $737437/2^{44}$ \\
        18 & $116855/2^{44}$ \\
        19 & $13885/2^{44}$ \\
        20 & $1134/2^{44}$ \\
        21 & $55/2^{44}$ \\
        22 & $1/2^{44}$ \\
    \end{tabular}
    \caption{Values of $g_k(m)$ when $m=23$.}
    \label{tab:gkvals}
\end{table}

\begin{lem}\label{thm:rigboundl} The following inequalities hold:
\eq{
    \ell(0)-\ell(2)&\in (-0.06359,-0.06268)\nonumber\\
    \ell(2)-\ell(4)&\in (-0.00369,-0.00256) \nonumber\\
    \ell(4)-\ell(6)&\in (0.02895, 0.03030) \nonumber\\
    \ell(6)-\ell(8)&\in (0.03838,0.03989) \nonumber\\
    \ell(8)-\ell(10)&\in (0.03523, 0.03686).
}
In particular, \eq{\ell(10) \ < \ \ell(8)\ < \ \ell(0)\ < \ \ell(6)\ <\ \ell(2)\ < \ \ell(4).}\end{lem}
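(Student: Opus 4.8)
The plan is to reduce the five inequalities to rigorous numerical control of the finitely many numbers $j(0), j(2), \dots, j(10)$. First I would invoke Corollary \ref{thm:ldiff}. Since $\ell$ and $j$ both vanish on negative arguments, the infinite sum there truncates, so each target difference becomes an \emph{explicit finite} linear combination of $j(0)$ through $j(2k+2)$. Concretely $\ell(0)-\ell(2)=-\frac14 j(2)$, $\ell(2)-\ell(4)=-\frac14 j(4)+\frac1{16}j(0)$, and in general $\ell(2k)-\ell(2k+2)$ only sees $j(0),\dots,j(2k+2)$, the largest index arising being $j(10)$ in the last case.

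Next I would convert Proposition \ref{thm:jbound} into a rigorous enclosure for each $j(2k)$. Taking $m=23$ and reading the values $g_k(23)$ off Table \ref{tab:gkvals} (produced in Calculation \ref{thm:gkvalscalc}), the proposition yields $|j(2k)-g_k(23)|<\frac{16}{9}\left(\frac34\right)^{24}=:\delta$, with $\delta\approx 0.0018$, so each $j(2k)$ lies in a known interval of half-width $\delta$ centered at $g_k(23)$. Substituting these intervals into the finite combinations from the first step and doing interval arithmetic — where the uncertainty of a given difference is $\delta$ times the sum of the absolute values of its coefficients — produces the five stated intervals for $\ell(2k)-\ell(2k+2)$. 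Their signs alone already give $\ell(0)<\ell(2)<\ell(4)$ together with $\ell(4)>\ell(6)>\ell(8)>\ell(10)$.

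To assemble the full chain $\ell(10)<\ell(8)<\ell(0)<\ell(6)<\ell(2)<\ell(4)$ I would compare non-adjacent values by telescoping: $\ell(2)-\ell(6)$, $\ell(0)-\ell(6)$, and $\ell(0)-\ell(8)=\sum_{k=0}^{3}\bigl(\ell(2k)-\ell(2k+2)\bigr)$ are each sums of the already-bounded intervals, and their signs settle the three remaining comparisons. The delicate point, and the main obstacle, is $\ell(8)<\ell(0)$: summing the four relevant intervals leaves a lower endpoint only marginally above $0$ (on the order of $10^{-4}$), so the argument hinges entirely on $\delta$ being small enough. This is exactly why the proof must work with the conditional $j$-sequence — which shrinks the per-term error by the factor $\frac49$ relative to the $\ell$-sequence (contrast Proposition \ref{thm:jbound} with Proposition \ref{thm:lbound}) — and must push the computation out to $m=23$; a naive bound on $\ell$ at any feasible $m$ would fail to close this last gap.
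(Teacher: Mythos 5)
Your proposal is correct and is exactly the paper's argument: the paper's proof is a one-line citation of Proposition \ref{thm:jbound}, Corollary \ref{thm:ldiff} and Calculation \ref{thm:gkvalscalc}, and you have simply filled in the interval arithmetic (truncating the sum in Corollary \ref{thm:ldiff}, enclosing each $j(2k)$ within $\frac{16}{9}\left(\frac34\right)^{24}$ of $g_k(23)$, and telescoping for the non-adjacent comparisons), which reproduces the stated intervals. Your observation that $\ell(0)-\ell(8)$ is the razor-thin step (lower endpoint about $5\times 10^{-5}$ after telescoping) is consistent with the paper's own footnote identifying it as the bottleneck.
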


\begin{proof}
This follows from Proposition \ref{thm:jbound}, Corollary \ref{thm:ldiff} and Calculation \ref{thm:gkvalscalc}.
\end{proof}

\begin{proof}[Proof of Theorem \ref{thm:l10}]
Follows from Claim \ref{thm:pnmlim}, Corollary \ref{thm:pnmevenpos} and Lemma \ref{thm:rigboundl}.
\end{proof}

\ \\

We report on some numerical bounds.

\begin{thm}\label{thm:lreal} The following inequalities hold:
\eq{0.12165&\ < \ \ell(0)\ < \ 0.12255 \nonumber\\
0.18434&\ < \ \ell(2)\ < \ 0.18614  \nonumber\\
0.18713&\ < \ \ell(4)\ < \ 0.18959 \nonumber\\
0.15728&\ < \ \ell(6)\ < \ 0.16019  \nonumber\\
0.11801&\ < \ \ell(8)\ < \ 0.12119 \nonumber\\
0.08188&\ < \ \ell(10)\ < \ 0.08523 \nonumber\\
0.05355&\ < \ \ell(12)\ < \ 0.05700 \nonumber\\
0.03334&\ < \ \ell(14)\ < \ 0.03685 \nonumber\\
0.01981&\ < \ \ell(16)\ < \ 0.02335 \nonumber\\
0.01115&\ < \ \ell(18)\ < \ 0.01471 \nonumber\\
0.00580&\ < \ \ell(20)\ < \ 0.00937.}
\end{thm}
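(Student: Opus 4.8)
The plan is to turn the exact data of Table~\ref{tab:gkvals} into rigorous enclosures for each $\ell(k)$ by means of the inversion formula of Corollary~\ref{thm:jform}, namely $\ell(k)=\sum_{i=0}^\infty\frac{i+1}{2^{i+2}}j(k-2i)$, valid for even $k$. First I would record that $j(k)=0$ for every $k<0$: since $\qnmb{n}(k)$ is the conditional probability that $|S-S|=2n-1-k$, it vanishes once $2n-1-k>2n-1$, and the same holds in the limit. Consequently the series for each even $k$ collapses to the finite sum $\ell(k)=\sum_{i=0}^{k/2}\frac{i+1}{2^{i+2}}j(k-2i)$, so that the eleven values $\ell(0),\ell(2),\dots,\ell(20)$ are expressed purely in terms of $j(0),j(2),\dots,j(20)$.

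Next I would enclose each of these $j$-values. Proposition~\ref{thm:jbound} with $m=23$ gives $\bigl|j(2t)-g_t(23)\bigr|<\frac{16}{9}\tvfp^{24}=:\delta$ for all $t\le 22$, hence in particular for the indices $t=0,1,\dots,10$ that occur here (the hypothesis $m>t$ is satisfied throughout). Substituting the exact rationals $g_t(23)$ from Table~\ref{tab:gkvals} and working in exact arithmetic over the common denominator $2^{44}$, each $\ell(k)$ lands in an interval centered at $\sum_i \frac{i+1}{2^{i+2}}\,g_{(k-2i)/2}(23)$ of half-width $\delta\sum_{i=0}^{k/2}\frac{i+1}{2^{i+2}}$. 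The displayed decimal bounds then follow by rounding these rational endpoints outward, i.e.\ the lower bound down and the upper bound up.

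The one point deserving care --- and what I expect to be the main obstacle --- is controlling the accumulated error rather than any single evaluation. Because the coefficients of the full inversion formula are nonnegative and satisfy $\sum_{i=0}^\infty\frac{i+1}{2^{i+2}}=1$, the truncated weights obey $\sum_{i=0}^{k/2}\frac{i+1}{2^{i+2}}\le 1$, so the total uncertainty in each $\ell(k)$ never exceeds $\delta\approx 0.00178$; this is precisely what keeps the enclosures narrow enough to be nontrivial. Concretely, $\ell(0)=\tfrac14 j(0)$ carries only half-width $\delta/4$, whereas for $\ell(20)$ the weights nearly exhaust the unit mass and the half-width approaches $\delta$, which matches the progressively wider intervals claimed in the statement. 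No analytic input beyond Corollary~\ref{thm:jform}, Proposition~\ref{thm:jbound}, and the computation recorded in Calculation~\ref{thm:gkvalscalc} is needed; the remainder is verified rational arithmetic together with careful outward rounding.
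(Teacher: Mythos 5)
Your proposal is correct and follows exactly the paper's route: the paper's proof of Theorem \ref{thm:lreal} consists precisely of invoking Proposition \ref{thm:jbound}, Corollary \ref{thm:jform}, and Calculation \ref{thm:gkvalscalc}, which are the three ingredients you use. Your additional observations --- that $j(k)=0$ for $k<0$ truncates the series to a finite sum, and that the weights $\frac{i+1}{2^{i+2}}$ sum to at most $1$ so the total half-width never exceeds $\frac{16}{9}\tvfp^{24}\approx 0.00178$ --- are exactly the (unstated) details that make the paper's one-line proof work.
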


\begin{proof}
The claims follow from Proposition \ref{thm:jbound}, Corollary \ref{thm:jform} and Calculation \ref{thm:gkvalscalc}.
\end{proof}


The rigorous bounds are illustrated in Figure \ref{fig:lkbound}.

\begin{figure}[h]
\center\includegraphics[width=0.8\textwidth]{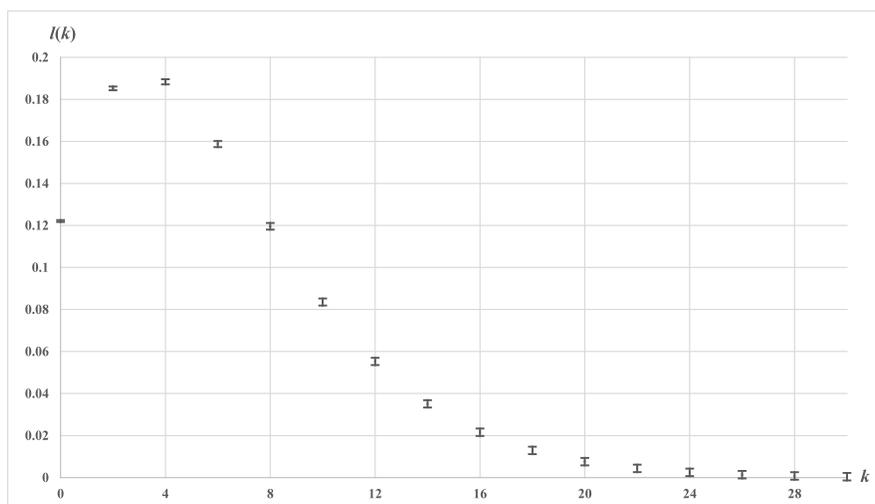}
\caption{\label{fig:lkbound} Bounds of $\ell(k)$ for $0\le k\le 30$. (Odd $k$'s are omitted.)}
\end{figure}

After proving an auxiliary result we will prove Theorem \ref{thm:obs3}.

\begin{lem}\label{thm:el}
$\sui{0} i \cdot \ell(i)=6$.
\end{lem}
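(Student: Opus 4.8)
The plan is to recognize $\sum_{i\ge 0} i\,\ell(i)$ as the limiting expected number of missing differences and to evaluate that expectation directly. Write $D_n := 2n-1-|S-S|$ for the number of missing differences, so that $P(D_n=k)=\pnm{k}$ and $\sum_{i\ge 0} i\,\ell(i)=\sum_{i\ge 0} i\,\lin\pnm{i}$. For each fixed $n$ the quantity $E[D_n]=\sum_i i\,\pnm{i}$ is a finite sum, so the crux of the argument is the interchange $\sum_i i\,\lin\pnm{i}=\lin\sum_i i\,\pnm{i}=\lin E[D_n]$, after which it remains to compute the scalar limit $\lin E[D_n]$.

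First I would compute $\lin E[D_n]$ by summing $P(k\notin S-S)$ over all differences. Using the $\pm$-symmetry of $S-S$ (Observation 1) and $P(0\notin S-S)=P(S=\emptyset)=2^{-n}$, we have $E[D_n]=2^{-n}+2\sum_{k=1}^{n-1}P(k\notin S-S)$. The key observation is that for a top-fringe difference $k=n-1-j$ the realizing pairs $\{b,b+k\}$, $b=0,\dots,j$, occupy the disjoint vertex blocks $\{0,\dots,j\}$ and $\{n-1-j,\dots,n-1\}$, hence form $j+1$ vertex-disjoint edges; consequently $P((n-1-j)\notin S-S)=\tvfp^{\,j+1}$ exactly whenever $2j<n-1$. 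Letting $n\to\infty$, the fringe terms sum to the geometric series $\sum_{j\ge 0}\tvfp^{\,j+1}=3$, while Lemma 11 in \cite{MO} bounds each remaining (bulk) term with $k<\novt$ by $\tvfp^{\,n/3}$, so their total is at most $n\,\tvfp^{\,n/3}\to 0$. Hence $\lin E[D_n]=2\cdot 3=6$.

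The main obstacle is justifying the interchange of limit and infinite sum, i.e.\ ruling out probability mass of $\pnm{\cdot}$ escaping to infinity; this is a uniform-integrability statement. It suffices to prove a uniform second-moment bound $\sup_n E[D_n^2]<\infty$, since then $E[D_n\,\mathbf 1_{D_n>K}]\le E[D_n^2]/K\to 0$ uniformly in $n$, which gives uniform integrability and hence $\lin E[D_n]=\sum_i i\,\ell(i)$. To bound $E[D_n^2]$ I would decompose the missing differences into end gaps and interior gaps: writing $a:=\min S$ and $b:=n-1-\max S$, the largest realizable positive difference is $\max S-\min S=n-1-a-b$, so exactly the $a+b$ largest positive differences are missing and $Y_n:=\#\{k\ge 1:k\notin S-S\}=(a+b)+I_n$, where $I_n\ge 0$ counts interior missing differences and $D_n=2Y_n+\mathbf 1_{S=\emptyset}$. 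Since $P(a\ge s)=P(b\ge s)=2^{-s}$ and $a,b$ are independent for $s<\novt$, the end gaps have all moments bounded uniformly in $n$; and conditioning on the realized range $\max S-\min S$ reduces $I_n$ to the missing-difference count of a sub-problem on an interval with both endpoints forced into $S$, whose moments are bounded uniformly by Lemma \ref{thm:localis} (equivalently Lemma 11 in \cite{MO}) because interior differences have geometrically many representations. Combining $\sup_n E[D_n^2]<\infty$ with the computation of the previous paragraph yields $\sum_{i\ge 0} i\,\ell(i)=6$.
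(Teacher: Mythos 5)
Your core identity is the same one the paper uses: both proofs recognize $\sui{0} i\cdot\ell(i)$ as the limit of the expected number of missing differences, $\sum_i i\,\pnm{i}=2n-1-\frac{1}{2^n}\sum_{S\subseteq[n]}|S-S|$. You diverge in the two supporting steps, and in both cases your route is genuinely different. For the value of the limit, the paper simply cites Theorem 3 of \cite{MO} (that $\frac{1}{2^n}\sum_{S}|S-S|$ is $2n-7+o(1)$), whereas you rederive it from scratch via the exact fringe formula $P(n-1-j\notin S-S)=\tvfp^{j+1}$ for $2j<n-1$ and the geometric series $\sum_{j\ge0}\tvfp^{j+1}=3$, with Lemma 11 of \cite{MO} killing the bulk terms; this computation is correct and buys self-containedness at the cost of length. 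More substantively, the paper's first equality $\sui{0}i\cdot\ell(i)=\lin\sui{0}i\cdot\pnm{i}$ silently interchanges a limit with an infinite weighted sum, and the uniform convergence of Remark \ref{thm:unicorn} does not by itself justify this since the weights $i$ are unbounded; your uniform-integrability argument via $\sup_n E[D_n^2]<\infty$ (end gaps $a+b$ with geometric tails, interior gaps controlled through the conditional bounds of Lemma \ref{thm:localis}) supplies exactly the missing tail control. That second-moment bound is only sketched, but the sketch is sound and completable (e.g.\ $E[I_n^2]\le\bigl(\sum_k\sqrt{p_k}\bigr)^2$ with $p_k$ geometrically small), so on this point your version is more careful than the paper's own proof, which has a minor, fixable gap there.
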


\begin{proof}
\eq{
    \sui{0}i \cdot \ell(i)&\ = \ \lin\sui{0}i \cdot\pnm i\nonumber\\
    &\ = \ \lin\sui{0}\frac{1}{2^n}(2n-1-(2n-1-i))\cdot\#(S\subseteq [n]: |S-S|=  2n-1-i)\nonumber\\
    &\ = \ \lin(\frac{1}{2^n}(2n-1)2^n-\frac{1}{2^n}\sui{0}i\#(S\subseteq [n]: |S-S|=i))\nonumber\\
    &\ = \ \lin(2n-1-\frac{1}{2^n}\su_{S\subseteq [n]}|S-S|)\nonumber\\
    &\ = \ 6\text{ (by Theorem 3 of \cite{MO}\footnotemark{}).}
}
\footnotetext{It states that for any AP $A$ of size $n$, $\frac{1}{2^n}\sum_{S\subseteq A}|S-S|$ converges to $2n - 7$ when $n\to \infty$.}
\end{proof}



\begin{thm}\label{thm:lpeak}
For all $k\neq 4$, $\ell(k)<\ell(4)$.
\end{thm}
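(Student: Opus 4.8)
The plan is to prove $\ell(k)<\ell(4)$ for all $k\neq 4$ by splitting on the size and parity of $k$, leaning on the ordering already established in Theorem \ref{thm:l10}, the numerical bounds of Theorem \ref{thm:lreal}, and a first-moment tail estimate coming from Lemma \ref{thm:el} for the large-$k$ regime.

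First I would dispose of the easy cases. For odd $k$, Observation 1 gives $\ell(k)=0$, while Corollary \ref{thm:pnmevenpos} (with $k=2$) gives $\ell(4)>0$, so $\ell(k)=0<\ell(4)$. For the even indices $k\in\{0,2,6,8,10\}$, the chain $\ell(10)<\ell(8)<\ell(0)<\ell(6)<\ell(2)<\ell(4)$ of Theorem \ref{thm:l10} already yields $\ell(k)<\ell(4)$. For the even indices $k\in\{12,14,16,18,20\}$, I would simply invoke Theorem \ref{thm:lreal}: the largest of these upper bounds is $\ell(12)<0.05700$, and since $\ell(4)>0.18713$, each of these satisfies $\ell(k)<0.05700<0.18713<\ell(4)$.

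The remaining, and genuinely substantive, case is even $k\ge 22$. Here I would use the first-moment identity $\sum_{i=0}^\infty i\,\ell(i)=6$ of Lemma \ref{thm:el}. Because every summand is non-negative, for any even $k\ge 22$ we have the Markov-type inequality $k\,\ell(k)\le \sum_{i\ge 22} i\,\ell(i)=6-\sum_{i=0}^{20} i\,\ell(i)$. Substituting the lower bounds of Theorem \ref{thm:lreal} into the finite sum gives $\sum_{i=0}^{20} i\,\ell(i)\gtrsim 5.56$, so the tail is bounded by $\sum_{i\ge 22} i\,\ell(i)\lesssim 0.44$. Consequently $\ell(k)\le 0.44/k\le 0.44/22<0.02<\ell(4)$ for every even $k\ge 22$, which closes the last case and completes the proof.

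The only step needing an idea beyond bookkeeping is this tail bound, and it is also the place to be careful: the argument works precisely because the identity $\sum i\,\ell(i)=6$ converts the crude Markov estimate into the quantitative decay $\ell(k)=O(1/k)$, which eventually falls below the constant $\ell(4)$. The main thing to verify is that the finite lower-bound computation leaves enough room, i.e. that $\sum_{i=0}^{20} i\,\ell(i)$ exceeds $6-22\,\ell(4)$; the tabulated bounds of Theorem \ref{thm:lreal} do this comfortably (the tail mass $\approx 0.44$ is far below $22\cdot 0.187$). Should a wider safety margin be wanted, one could extend the explicit bounds to higher indices, but the values already available suffice.
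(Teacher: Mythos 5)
Your proposal is correct, but it handles the large-$k$ regime differently from the paper. Both arguments ultimately rest on the first-moment identity $\sum_{i\ge 0} i\,\ell(i)=6$ of Lemma \ref{thm:el}, and both dispatch $k<12$ via Theorem \ref{thm:l10} (and odd $k$ trivially). The divergence is in what else you feed into that identity. The paper needs no numerics beyond Theorem \ref{thm:l10}: it combines the moment identity with the normalization $\sum_i\ell(i)=1$ and with Lemma \ref{thm:lovt} (which gives $\sum_{i\ge k}\ell(i)\ge 2\ell(k)$ via the half-decay $\ell(2k+2)\ge\ell(2k)/2$) to obtain
\[
2(k-6)\,\ell(k)\ <\ \sum_{i\ge 6}(i-6)\,\ell(i)\ =\ 6\ell(0)+4\ell(2)+2\ell(4)\ <\ 12\,\ell(4),
\]
which yields $\ell(k)<\ell(4)$ uniformly for all $k\ge 12$ in one stroke. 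You instead cover $12\le k\le 20$ by the explicit upper bounds of Theorem \ref{thm:lreal} and then apply a bare Markov bound for $k\ge 22$, using the lower bounds of the same table to show $\sum_{i\le 20} i\,\ell(i)>5.56$, hence $k\,\ell(k)\le\sum_{i\ge 22}i\,\ell(i)<0.44$ and $\ell(k)<0.02<\ell(4)$. Your arithmetic checks out and every cited ingredient is available, so the proof is sound; the trade-off is that you lean on the heavy computation underlying Theorem \ref{thm:lreal}, whereas the paper's telescoping trick (shifting the moment identity by $6$ and exploiting the extra factor of $2$ from Lemma \ref{thm:lovt}) gets the same conclusion from the much lighter input of Theorem \ref{thm:l10} alone.
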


\begin{proof}
Theorem \ref{thm:l10} proves the case for $k<12$. When $k\ge12$, by Lemma \ref{thm:lovt} and \ref{thm:el},
\eq{
    2(k-6)  \cdot  \ell(k)&\ <\ \sui{k}(k-6) \cdot \ell(i)\nonumber\\
    &\ <\ \sui{6}(i-6)  \cdot \ell(i)\nonumber\\
    &\ = \ \sui{0}(i-6) \cdot \ell(i)+6\ell(0)+4\ell(2)+2\ell(4)\nonumber\\
    &\ <\ \sui{0}i \cdot \ell(i)-6\sui{0}\ell(i)+(6+4+2)\ell(4)\nonumber\\
    &\ = \ 12 \ell(4).
}
						
Thus $\Rightarrow \ell(k)<\ell(4)$.
\end{proof}

\begin{proof}[Proof of Theorem \ref{thm:obs3}]
The theorem follows from Theorem \ref{thm:lpeak} and Remark \ref{thm:unicorn}.
\end{proof}

\begin{rek}
Theorem \ref{thm:obs3} gives a partial answer to Question \ref{que:311}; the rather strange occurrence of $\pnm 2=\pnm 4$ happens only finitely many times.
\end{rek}

\subsection{About rulers}

\begin{defi}\label{defn:compruler}
A \textit{ruler} of length $L$ is any subset $R\subseteq \{0, \dots, L\}$. It is \textit{complete} if it can measure every distance shorter or equal to its length; that is, $\{0,...,L\}\subseteq R-R$.
\end{defi}

\begin{lem}\label{thm:rulers}
Let $a_n$ be the number of complete rulers of length $n$; then $a_{n-1}\sim \ell(0)\cdot 2^n$.
\end{lem}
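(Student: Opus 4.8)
The plan is to reinterpret a complete ruler as a diffset that misses no differences, once the index shift between $\{0,\dots,n\}$ and $[n+1]$ is accounted for. First I would note that $\{0,\dots,n\}=[n+1]$, so a ruler of length $n$ is exactly a subset $R\subseteq[n+1]$, and there are $2^{n+1}$ of these. Since $R\subseteq[n+1]$, every difference lies in the range $R-R\subseteq\{-n,\dots,n\}$, a set of $2n+1=2(n+1)-1$ elements; thus $|R-R|\le 2(n+1)-1$ always. The completeness condition $\{0,\dots,n\}\subseteq R-R$ is a one-sided containment, but by Observation 1 the diffset is symmetric ($m\in R-R\Rightarrow -m\in R-R$), so $\{0,\dots,n\}\subseteq R-R$ already forces the full two-sided equality $R-R=\{-n,\dots,n\}$. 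Hence $R$ is complete if and only if $|R-R|$ attains its maximum, i.e. $2(n+1)-1-|R-R|=0$: a complete ruler of length $n$ is precisely a subset of $[n+1]$ that misses $0$ differences.

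With this identification I would count directly. By definition $\pnmb{n+1}(0)=P_{R\subseteq[n+1]}\!\left(2(n+1)-1-|R-R|=0\right)$, so the number of complete rulers of length $n$ is
\[
a_n \ = \ 2^{n+1}\,\pnmb{n+1}(0),\qquad\text{equivalently}\qquad a_{n-1}\ = \ 2^{n}\,\pnmb{n}(0).
\]

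The final step is to pass to the limit. By the Definition of $\ell$ we have $\pnmb{n}(0)\to\ell(0)$ as $n\to\infty$, and $\ell(0)>0$ by Corollary \ref{thm:pnmevenpos} (the case $k=0$). Dividing $a_{n-1}=2^{n}\pnmb{n}(0)$ by $2^{n}\ell(0)$ then gives $a_{n-1}/(2^{n}\ell(0))=\pnmb{n}(0)/\ell(0)\to 1$, which is exactly $a_{n-1}\sim\ell(0)\cdot 2^{n}$. I do not expect any genuine obstacle here: the entire content is the correct bookkeeping of the index shift (length-$n$ rulers live in $[n+1]$, so the stated asymptotic naturally comes out in terms of $a_{n-1}$ and $\pnmb{n}(0)$) together with the symmetry observation that upgrades the one-sided completeness condition to the full diffset equality. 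The one point that must not be overlooked is the positivity of $\ell(0)$, which is needed to make the asymptotic equivalence meaningful and is supplied by Corollary \ref{thm:pnmevenpos}.
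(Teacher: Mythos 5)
Your proposal is correct and is essentially the paper's proof: both identify complete rulers of length $n-1$ with subsets of $[n]$ whose diffset misses $0$ differences, giving $a_{n-1}=\pnm{0}\cdot 2^{n}$, and then pass to the limit. You simply spell out two details the paper leaves implicit --- the symmetry argument upgrading $\{0,\dots,n\}\subseteq R-R$ to $|R-R|=2n+1$, and the positivity of $\ell(0)$ needed for the asymptotic equivalence --- both of which are accurate.
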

\begin{proof}
$S\subseteq[n]$ is a complete ruler of length $n-1$ iff $|S-S|=2n-1$, so the number of complete rulers of length $n-1$ is equal to $\pnm 0\cdot 2^n$, which goes to $\ell(0)\cdot 2^n$.
\end{proof}

\begin{proof}[Proof of Theorem \ref{thm:103295}]
The claim follows from Lemma \ref{thm:rulers} and Theorem \ref{thm:lreal}. Here~$c=2\ell(0)$.
\end{proof}

\section{Conjectures}

Intuitively, when $k\lll n$, randomly choosing $k$ elements from $[n]$ usually gives $|S-S|=k(k-1)+1$. On the other hand, to have $|S-S|=k(k-1)+3$ requires a maximal appearance of coincidences (repeated differences). Hence we have the following conjecture about the divots in $\pnhz$.

\begin{conj} For every $k>1$, $k(k-1)+3$ is a divot of $\pnhz$ for sufficiently large $n$. Furthermore, they are the only divots.\end{conj}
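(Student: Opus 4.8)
The plan is to pass from the finite-$n$ probabilities to their leading asymptotics. Fix an odd $v$ and write $2^n\pnh v=\#\{S\subseteq[n]:|S-S|=v\}$. Since $|S-S|\ge 2|S|-1$, only sets with $|S|\le (v+1)/2$ occur, so there are only finitely many relevant ``coincidence patterns'': for $S=\{a_1<\dots<a_s\}$, record which of the linear relations $a_i-a_j=a_k-a_\ell$ hold, exactly as in the proof of Proposition \ref{thm:divot5}. Each pattern $P$, of rank $r$ on $s$ points, is realised by $\Theta(n^{\,s-r})$ sets in $[n]$ (lattice points in a polytope), so summing the finitely many patterns gives
\[
2^n\pnh v \ = \ c(v)\,n^{D(v)}\bigl(1+o(1)\bigr),\qquad D(v)\ :=\ \max_{P\ \text{realising}\ v}\,(s-r),
\]
with $c(v)$ the total leading coefficient of the top-dimensional patterns. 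A pattern of rank $r$ on $s$ points is the same datum as a fixed integer configuration $T$ of full affine dimension $s-r-1$ with $|T-T|=v$, embedded in $[n]$ by a generic affine map (a translation plus a generic weight vector); hence $D(v)=1+\delta(v)$, where $\delta(v)$ is the largest affine dimension of an integer set $T$ with $|T-T|=v$, and $c(v)$ is a volume-weighted count of the maximal-dimension shapes (with an inclusion--exclusion recording each $S$ once). Making this bookkeeping precise is routine; the content is in computing $\delta(v)$ and $c(v)$.

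The second step is to compute $\delta(v)$. At affine dimension $d$ the minimizer is the $d$-simplex, i.e.\ a Sidon $(d+1)$-set, giving $\delta(d^2+d+1)=d$; these are the Sidon peaks, and they reprove $k(k-1)+1$ as the generic diffset size. The key is a gap phenomenon one dimension up: a configuration of affine dimension $d$ on $d+2$ points has a single affine dependency, which forces at most two difference coincidences (the parallelogram relation is extremal), so its diffset has size at least $d^2+3d-1$, attained by completing the simplex to a parallelogram, and adding further points only increases the diffset. I would then prove the complementary statement that every odd size $\ge d^2+3d-1$ is attained at dimension $d$, so that the attainable sizes are exactly $\{d^2+d+1\}\cup\{v\ \text{odd}:v\ge d^2+3d-1\}$, with a gap on the odd values of $[d^2+d+3,\ d^2+3d-3]$. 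Taking $\delta(v)$ to be the largest admissible $d$ then makes $\delta$ dip by one precisely on these gaps; an elementary computation identifies the left endpoint $d^2+d+3=k(k-1)+3$ (with $k=d+1$) as the first gap value, flanked on the left by the Sidon peak $k(k-1)+1$ at $\delta=d$. This isolates $k(k-1)+3$ as the only candidate divot in each window and, for $k\ge4$ (i.e.\ $d\ge3$, when the gap is nonempty), already makes the comparison with the preceding peak a full power of $n$. For $k\le3$ the gap is empty and $k(k-1)+3$ shares its degree with the peak, so the divot there is a statement about leading constants.

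The third step handles the comparisons the exponent leaves open, and at least one such comparison always survives. When $k\le3$ the gap is empty, so the peak and the candidate divot share a degree ($D(3)=D(5)=2$ and $D(7)=D(9)=3$), precisely the situation of Proposition \ref{thm:divot5}; the divot is then established from the leading constants $c(k(k-1)+1)>c(k(k-1)+3)$, supplemented for $k=3$ by $c(9)<c(11)$ since $11$ also shares that degree. When $k\ge5$ the drop at the peak is by a power of $n$, but the right-hand neighbour $k(k-1)+5$ lies in the same gap run at $\delta=k-2$, so $\pnh{k(k-1)+3}<\pnh{k(k-1)+5}$ is again a constant comparison. For the ``only divots'' half I must show that along each maximal run of constant $D$ the constants $c(v)$ have a unique local minimum, occurring at the resident value $k(k-1)+3$ when the run contains one and being absent (the run monotone) otherwise, so that the further gap values $k(k-1)+5,\dots,k^2+k-5$ and the intervening climbing plateaus produce no additional divot. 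All of these reduce to estimating the volume-weighted count $c(v)$ of top-dimensional shapes with $|T-T|=v$, the finite computation the authors performed by hand for $k\le4$.

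The hard part will be making the constant estimates uniform in $k$. Any single divot is a finite check --- produce the exact quasi-polynomial $\pnh v$ for three consecutive values and compare --- which is exactly why $k\le4$ is already settled; the real obstruction is controlling $c(v)$ for all $v$ at once. This needs, first, a classification uniform in $k$ of the full-affine-dimension configurations realising a given $v$ just past a Sidon peak --- rigid objects, essentially a simplex decorated by a bounded number of parallelogram completions and short progressions --- and, second, a monotonicity estimate for their volume-weighted counts as $v$ runs through a gap or a climbing plateau. I expect these enumerative steps, rather than the degree bookkeeping, to be the crux: the extremal configurations are explicit for each $k$ but show no obvious closed form in $k$, so the inequalities $c(k(k-1)+1)>c(k(k-1)+3)$ and the within-run monotonicity of $c$ resist a single uniform argument.
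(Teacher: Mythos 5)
This statement is a \emph{conjecture} in the paper: the authors prove only the cases $k=2,3,4$ (the divots at $5$, $9$, $15$ of Theorem \ref{thm:propobs2}), by exact counting with closed forms for small $k$, and for general $k$ they offer only the heuristic that $k$ random points typically give $k(k-1)+1$ differences while $k(k-1)+3$ requires a maximal pile-up of coincidences. Your first two steps are a careful, and as far as I can check correct, elaboration of exactly that heuristic: the degree bookkeeping $2^n\pnh v\sim c(v)\,n^{D(v)}$, the Sidon peaks $D(k(k-1)+1)=k$, and the gap forcing $D(k(k-1)+3)=k-1$ for $k\ge4$ are all consistent with the data in Appendix A (for instance $D(13)=D(17)=4>3=D(15)$ accounts for the divot at $15$ purely by exponents). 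So the skeleton is sound.

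But the proposal is not a proof, and the gaps are exactly where you locate them. First, the structural claims of step 2 are asserted rather than proved: that a dimension-$d$ configuration on more than $d+2$ points still has $|T-T|\ge d^2+3d-1$ does not follow from ``adding points only increases the diffset'' --- you need that every such configuration contains a $(d+2)$-point, dimension-$d$ subconfiguration, a matroid-type statement about the lattice of coincidence relations --- and the complementary attainability of every odd $v\ge d^2+3d-1$ at dimension $d$ is also unproven. Second, and decisively: for every $k\ge5$ the candidate divot $k(k-1)+3$ and its right neighbour $k(k-1)+5$ have the \emph{same} degree $k-1$, so whether the divot sits at $k(k-1)+3$ or at $k(k-1)+5$ is decided entirely by the leading constants $c(v)$, i.e.\ by a volume-weighted enumeration of extremal configurations that you do not carry out; likewise the ``only divots'' half needs monotonicity of $c$ along each constant-degree run, which you also leave open. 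You concede that these inequalities ``resist a single uniform argument'' --- but they \emph{are} the conjecture. What you have is a genuine and potentially useful reduction of the conjecture to a family of explicit enumerative inequalities about leading coefficients, not a proof, and it goes beyond anything the paper itself establishes for $k\ge5$.
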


We also noticed that once a divot appears in $\pnhz$, it seems to never move again:

\begin{conj} If $k$ is a divot of $\pnhz$ for $n=n_1$, then it is also a divot for any $n> n_1$.\end{conj}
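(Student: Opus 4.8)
The plan is to pass from probabilities to integer counts and exploit a telescoping structure in $n$. Write $A_k(n):=2^n\pnh{k}=\#\{S\subseteq[n]:|S-S|=k\}$, so that a divot at $k$ for parameter $n$ is exactly the pair of strict inequalities $A_{k-2}(n)>A_k(n)$ and $A_{k+2}(n)>A_k(n)$. For odd $k\ge5$ the nearest nonzero neighbours sit at $k\pm2$, since $|S-S|$ is $0$ or odd and so the even slot $k\pm1$ is always empty; because the counts below are nondecreasing in $n$, a neighbour that is nonzero at $n_1$ stays nonzero, so ``nearest nonzero neighbour'' remains $k\pm2$ for all $n\ge n_1$. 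Normalising each $S$ by its minimum gives $A_k(n)=\sum_{d\ge0}c_{k,d}\,(n-d)_+$, where $c_{k,d}$ counts the subsets $T$ of $\{0,\dots,d\}$ with $0,d\in T$ (diameter exactly $d$) and $|T-T|=k$.

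First I would record the telescoping identities this representation produces. A one-line computation gives the first difference
\[ A_k(n+1)-A_k(n)=\sum_{d=0}^{n}c_{k,d}=:N_k(n+1), \]
where $N_k(m)=\#\{T\subseteq[m]:0\in T,\ |T-T|=k\}$ counts shapes of diameter $\le m-1$, and a second application gives the clean
\[ A_k(n+2)-2A_k(n+1)+A_k(n)=c_{k,n+1}\ \ge\ 0, \]
so each $A_k$ is convex in $n$. Setting $D_\pm(n):=A_{k\pm2}(n)-A_k(n)$, the divot asserts $D_-(n)>0$ and $D_+(n)>0$, and the identities give $\Delta D_\pm(n)=N_{k\pm2}(n+1)-N_k(n+1)$ together with $\Delta^2 D_\pm(n)=c_{k\pm2,\,n+1}-c_{k,\,n+1}$.

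The reduction then runs in two stages. Convexity alone does not preserve positivity (a convex sequence can be positive, dip negative, and return), so I would instead prove monotonicity on $[n_1,\infty)$: if $N_{k\pm2}(m)\ge N_k(m)$ for all $m\ge n_1+1$, then $\Delta D_\pm\ge0$ there, hence $D_\pm$ is nondecreasing on $[n_1,\infty)$, and $D_\pm(n_1)>0$ forces $D_\pm(n)\ge D_\pm(n_1)>0$ for all $n>n_1$, which is exactly the claim with strictness preserved. The same trick reduces the $N$-inequalities one level further: since $\Delta\big(N_{k\pm2}-N_k\big)(m)=c_{k\pm2,m}-c_{k,m}$, it suffices to establish the base inequalities $c_{k\pm2,d}\ge c_{k,d}$ for all large $d$ together with a finite check near the threshold. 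Everything in this stage is routine and rigorous.

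The base inequalities $c_{k-2,d}\ge c_{k,d}$ and $c_{k+2,d}\ge c_{k,d}$ are the main obstacle, being precisely a divot statement (non-strict, and uniform in $d$) for the fixed-diameter, doubly-anchored ensemble. For any \emph{individual} divot this is tractable: each $c_{k,d}$ is a quasi-polynomial in $d$ (it counts bounded-complexity configurations of prescribed diameter), so one settles the inequalities by comparing leading terms for large $d$ and checking a finite initial range, mirroring the computations of Proposition \ref{thm:divot5}. For the divot at $5$ one finds $c_{3,d}=1$ (for $d\ge1$), $c_{5,d}=[\,2\mid d\,]$, and $c_{7,d}=(d-1)-[\,2\mid d\,]+[\,3\mid d\,]$ (for $d\ge2$), whence $c_{3,d}\ge c_{5,d}$ for all $d$ and $c_{7,d}\ge c_{5,d}$ for $d\ge3$; feeding these into the cascade (with a finite check at the first appearance $n=4$) shows the $5$-divot never disappears. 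The genuine difficulty is proving the base inequalities for \emph{all} divots simultaneously: this demands a structural classification of the shapes $T$ by $|T-T|$ -- the same understanding required by the companion conjecture on the location of divots -- most naturally an explicit map sending each diameter-$d$ shape with $|T-T|=k$ injectively into those with $|T-T|=k-2$, and likewise into those with $|T-T|=k+2$, with injectivity verified uniformly in $d$ and independently of $k$. Constructing such a map is where I expect the real work to lie, and it is why the statement remains a conjecture.
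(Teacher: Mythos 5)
This statement is Conjecture 4.2 of the paper; the authors offer no proof, so there is nothing of theirs to compare against, and the question is only whether your argument closes the problem. It does not: it is a (correct and genuinely useful) reduction, not a proof, and you say as much yourself in the last paragraph.

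The sound part is the telescoping skeleton. Writing $A_k(n)=2^n\pnh{k}$ and normalising by the minimum does give $A_k(n)=\sum_{d}c_{k,d}(n-d)_+$, hence $A_k(n+1)-A_k(n)=\sum_{d\le n}c_{k,d}$ and the convexity identity $\Delta^2A_k(n)=c_{k,n+1}\ge 0$; the cascade ``$c$-inequalities for large $d$ $\Rightarrow$ $N$-inequalities beyond a threshold $\Rightarrow$ $\Delta D_\pm\ge0$ $\Rightarrow$ $D_\pm(n)\ge D_\pm(n_1)>0$'' is valid, and your explicit formulas for $c_{3,d},c_{5,d},c_{7,d}$ do settle persistence of the divot at $5$ (modulo the small finite check you mention). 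The gap is exactly where you locate it: the base inequalities $c_{k-2,d}\ge c_{k,d}$ and $c_{k+2,d}\ge c_{k,d}$, uniformly in $d$ and for every $k$ that ever becomes a divot, are themselves an unproven divot-type statement for the doubly-anchored fixed-diameter ensemble, at least as hard as the conjecture on where the divots lie. No injection of the required kind is constructed, so the argument proves the conjecture for no $k$ beyond the ones already handled by Theorem \ref{thm:propobs2}-style finite computation plus your $k=5$ analysis.

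Two further cautions. First, your reduction proves a \emph{sufficient} condition that is strictly stronger than the conjecture: monotonicity of $D_\pm$ on $[n_1,\infty)$ is not necessary for its positivity, so if $c_{k,d}>c_{k\pm2,d}$ for infinitely many $d$ the approach is simply silent, while the conjecture could still hold. Second, the reading of ``divot'' as the pair $A_{k\pm2}(n)>A_k(n)$ silently assumes the nearest nonzero neighbours at $n_1$ already sit at $k\pm2$; if, say, $A_{k-2}(n_1)=0$, the hypothesis at $n_1$ concerns $k-4$, the relevant comparison changes as $n$ grows, and your base case $D_-(n_1)>0$ is not what the hypothesis gives you. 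This is a minor edge case but should be dispatched explicitly if you pursue this route.
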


About missing differences, we proved Theorem \ref{thm:obs3} by limits, hence not giving an explicit threshold $N$ such that every $n\ge N$ satisfies Observation 3. Experimental data suggest that 15 might be enough already, so we guess:

\begin{conj}
For all $n\ge 15$, $\forall k \neq 4$, $\pnm 4 > \pnm k$.
\end{conj}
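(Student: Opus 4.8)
The plan is to make every step in the proof of Theorem \ref{thm:obs3} effective, upgrading the qualitative ``sufficiently large $n$'' to the explicit threshold $15$. By Observation 1, $\pnm k = 0$ for odd $k$ except the single empty-set contribution $\pnm{2n-1} = 2^{-n}$, so it suffices to compare $\pnm 4$ against $\pnm k$ over even $k$. I would organize the comparison in $k$ into two parts handled by different tools: a tail of large $k$, where a finite-$n$ version of the peak argument of Theorem \ref{thm:lpeak} gives $\pnm k < \pnm 4$ uniformly in $n$; and a bulk of small even $k \neq 4$, where the effective convergence $\pnm{2k} \to \ell(2k)$ together with the separation of limits in Theorem \ref{thm:lreal} yields the inequality once $n$ is large. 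What is left after these two is a finite, explicitly bounded window of moderate $n$, which is the heart of the difficulty.

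For the tail I would reprove the peak argument at finite $n$. Its two ingredients, Lemma \ref{thm:lovt} and Lemma \ref{thm:el}, both descend from finite statements already in the excerpt: the displayed inequality in the proof of Lemma \ref{thm:lovt} is $\pnmb{n}(2k+2) \ge \oovt\,\pnmb{n-1}(2k)$, and the computation in Lemma \ref{thm:el} gives the exact identity $\sui{0} i\,\pnm i = 2n-1 - 2^{-n}\su_{S\subseteq [n]}|S-S|$. Feeding Theorem 3 of \cite{MO} with an explicit error term, say $2^{-n}\su_{S\subseteq[n]}|S-S| = 2n-7+\delta_n$ with $|\delta_n|$ bounded concretely, and chaining the geometric decay from Lemma \ref{thm:lovt}, one obtains an inequality of the shape $2(k-c_n)\,\pnm k < C_n\,\pnm 4$ valid for $k$ beyond a computable $K$. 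The one subtlety is that the finite form of Lemma \ref{thm:lovt} shifts $n$ downward; these shifts must be tracked, but they only degrade the constants by controlled factors, so for $n\ge 15$ the cutoff $K$ can be taken bounded.

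For the bulk I would use the effective convergence already recorded in the proof of Claim \ref{thm:pnmlim}: the error between $\pnm{2k}$ and the finitely computable $f_k(m)$, equivalently the sharper $g_k(m)$ route through the $j$-sequence of Corollary \ref{thm:jform}, is at most $\tvfp^{n/3}\cdot\novt + 4\tvfp^{m+1}$. Choosing $m$ as in Calculation \ref{thm:gkvalscalc} makes the second summand negligible, and the separation of the $\ell(2k)$ in Theorem \ref{thm:lreal} then transplants to $\pnm{\cdot}$, giving $\pnm 4 > \pnm k$ for each fixed small $k$ as soon as the first summand $\tvfp^{n/3}\cdot\novt$ is below the relevant gap. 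This determines a threshold $N^*$ beyond which both the tail and the bulk are settled rigorously.

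The main obstacle is the intermediate range of $n$, and it is concentrated entirely on the comparison $k=2$ versus $k=4$. This is the bottleneck gap: Lemma \ref{thm:rigboundl} gives $\ell(4)-\ell(2)\in(0.00256,0.00369)$, an order of magnitude smaller than every other gap, and the coincidence $\pnm 2 = \pnm 4$ at $n=14$ shows the inequality only just turns positive at $n=15$. Separating $\pnm 4$ from $\pnm 2$ through the explicit error $\tvfp^{n/3}\cdot\novt$ forces that quantity below roughly half the gap, about $0.0013$, which already pushes $N^*$ into the low hundreds; yet exhaustive enumeration is limited to around $n\approx 40$ by the squared complexity noted in the introduction. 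Neither tool reaches the window $40\lesssim n\lesssim N^*$, and closing it is exactly what separates the proven Theorem \ref{thm:obs3} from the conjectured threshold. I would attack this gap by seeking either a much sharper effective convergence rate tailored to $\pnm 2$ and $\pnm 4$ specifically, exploiting their concrete combinatorial structure rather than the crude union bound behind Lemma 11 of \cite{MO}, or a monotonicity-in-$n$ statement for $\pnmb{n}(4)-\pnmb{n}(2)$ that, combined with a direct check at $n=15$ and the known limiting sign, would settle the entire range at once. Establishing such monotonicity appears to be the hardest and least routine step.
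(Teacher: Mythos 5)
The statement you are proving is left as a conjecture in the paper --- the authors prove only the qualitative Theorem \ref{thm:obs3} (``for sufficiently large $n$'') and explicitly state that the threshold $15$ is a guess supported by experiment. So there is no paper proof to compare against, and your proposal, as you yourself acknowledge in its final paragraph, does not close the gap either. Your decomposition is sensible and your diagnosis of where it fails is accurate, but the failure is fatal to the claim as stated: the bulk argument via effective convergence needs the error term $\tvfp^{n/3}\cdot\novt$ from the proof of Claim \ref{thm:pnmlim} to drop below roughly half of $\ell(4)-\ell(2)\approx 0.003$, which by a direct estimate requires $n$ on the order of $100$ or more, while exhaustive enumeration (the paper's tables stop at $n=36$, reflecting the squared complexity of the diffset fringe interaction) cannot reach that far. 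The window in between is not covered by any tool you propose, and the two remedies you sketch --- a sharper convergence rate tailored to $\pnmb{n}(2)$ and $\pnmb{n}(4)$, or monotonicity of $\pnmb{n}(4)-\pnmb{n}(2)$ in $n$ --- are stated as hopes, not arguments. In particular the monotonicity claim is exactly the kind of statement the paper's machinery cannot produce (note the paper's own Question \ref{que:311} about the repeated coincidences $\pnm 2=\pnm 4$ at $n=3,11,12,14$, which suggests the difference is not behaving in an obviously monotone way for small $n$).

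Two smaller technical points in the parts you do sketch deserve flagging. First, in the tail argument the finite-$n$ form of Lemma \ref{thm:lovt} reads $\pnmb{n}(2k+2)\ge\oovt\,\pnmb{n-1}(2k)$ with the index $n$ shifted down; chaining it $k$ times to imitate the telescoping in the proof of Theorem \ref{thm:lpeak} accumulates shifts of total size comparable to $k$, and since $k$ ranges up to $2n-1$ in the tail, the claim that the shifts ``only degrade the constants by controlled factors'' needs a genuine argument (e.g.\ first disposing of $k$ larger than some fixed fraction of $n$ by a cruder bound) rather than an assertion. Second, an effective version of Lemma \ref{thm:el} requires an explicit error term in Theorem 3 of \cite{MO}, which the paper only cites as an asymptotic; extracting $|\delta_n|$ concretely is doable but is additional work you have not done. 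None of this changes the verdict: the proposal is a reasonable research plan for attacking an open conjecture, not a proof of it.
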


Recall that in Theorem \ref{thm:l10}, we compared the limiting probabilities of missing 0, 2, 4, 6, 8 and 10 differences, and found no divot. What about missing 12, or more? In fact, any two limiting probabilities can be approximated to be arbitrarily precise using our method, but we couldn't bound infinite many of them at the same time. Both intuition and experimental data seem to suggest that the decay after $\ell(4)$ should go on forever. Thus, we leave the following conjecture.

\begin{conj}\label{conj:idealconj}
In fact, $\ell(4)  > \ell(2)  > \ell(6) > \ell(0)  > \ell(8)  > \ell(10)  > \ell(12)  > \cdots$. In other words, the sequence $\ell$ has no divots.
\end{conj}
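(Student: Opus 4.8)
The conjecture combines two parts. The explicit ranking of the six values $\ell(0),\dots,\ell(10)$ is already established: Theorem \ref{thm:l10} gives $\ell(10)<\ell(8)<\ell(0)<\ell(6)<\ell(2)<\ell(4)$, and Theorem \ref{thm:lpeak} confirms $\ell(4)$ is the strict global maximum. What remains is the \emph{infinite tail}: the strict decay $\ell(2k)>\ell(2k+2)$ for every $k\ge 5$ (the cases $k=2,3,4$ are contained in Theorem \ref{thm:l10}). Once this is shown, the full chain $\ell(4)>\ell(2)>\ell(6)>\ell(0)>\ell(8)>\ell(10)>\ell(12)>\cdots$ follows by transitivity. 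The difficulty is that the rigorous machinery of Proposition \ref{thm:jbound} and Corollary \ref{thm:ldiff} can settle the sign of $\ell(2k)-\ell(2k+2)$ for any \emph{fixed} finite window of $k$'s (at the cost of computing $g_k(m)$ for large $m$), but it cannot dispatch infinitely many inequalities at once. So the plan is to pair a finite computation for small $k$ with an asymptotic argument valid for all large $k$.

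The asymptotic input I would use is a clean reformulation of $\ell$ in terms of the two independent fringes. Passing to the limit in $f_k(m)$ (the quantity controlled by Proposition \ref{thm:lbound}) identifies $\ell(2k)=P(N=k)$, where $N=\#\{\,j\ge 0: j\notin A+B\,\}$ and $A,B$ are independent uniformly random subsets of $\mathbb{Z}_{\ge 0}$ (each nonnegative integer lies in each set with probability $1/2$). Here $j\notin A+B$ records that the difference $n-1-j$ is missing in the limit, and independence of $A$ and $B$ reflects the non-interaction of the two fringes. As a consistency check, the $j$th event has probability $P(j\notin A+B)=(3/4)^{j+1}$, so $E[N]=\sum_{j\ge 0}(3/4)^{j+1}=3$, matching $\sum_{i\ge 0} i\,\ell(i)=6$ from Lemma \ref{thm:el} (since the number of missing differences is $2N$).

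With this model the tail of $\ell$ becomes a concrete computation. The dominant way to have $N=k$ for large $k$ is a single gap at the bottom: if $T:=\min A+\min B$, then every $j<T$ is missing and $T\in A+B$, so $N=T+E$ where $E$ (the number of missing values above $T$) is an a.s.\ finite ``overshoot'' whose law stabilizes as $k\to\infty$. Since $\min A$ and $\min B$ are independent geometrics with $P(\ge s)=2^{-s}$, one gets $P(T=t)=(t+1)2^{-(t+2)}$, whose generating function $1/(2-z)^2$ has a double pole at $z=2$. If $E$ contributes only an analytic factor in a disk of radius exceeding $2$, singularity analysis yields $\ell(2k)\sim C\,k\,2^{-k}$ for an explicit constant $C$; in particular $\ell(2k+2)/\ell(2k)\to 1/2$. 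Even effective two-sided bounds $c_1 k\,2^{-k}\le \ell(2k)\le c_2 k\,2^{-k}$ with $c_2/c_1<2$ force $\ell(2k)>\ell(2k+2)$ for all $k>c_2/(2c_1-c_2)=:k_0$. Finally I would verify the finitely many remaining cases $5\le k\le k_0$ using the rigorous bounds of Proposition \ref{thm:jbound} and Corollary \ref{thm:ldiff} with $m$ chosen large enough to separate consecutive values, exactly as in Lemma \ref{thm:rigboundl} and Theorem \ref{thm:lreal}.

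The main obstacle is the rigorous, \emph{effective} tail asymptotic. Two points are delicate. First, one must show that the multi-gap configurations (where the missing set is not an initial segment) are genuinely subdominant, i.e.\ that the overshoot $E$ has a probability generating function analytic past $|z|=2$, so that the dominant double pole of the generating function of $N$ sits exactly at $z=2$; this amounts to a renewal-type analysis of the random sumset $A+B$ near its minimum, where $T$ and $E$ are only asymptotically decoupled. Second, one must make the constants $c_1,c_2$ (hence the threshold $k_0$) explicit and small enough that the finite verification window $5\le k\le k_0$ remains within computational reach of the $g_k(m)$ routine. I expect the first of these—turning the heuristic ``prefix gap plus bounded overshoot'' picture into quantitative control with ratio $c_2/c_1<2$—to be the crux.
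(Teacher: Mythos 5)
First, note that the paper offers no proof of this statement: it is left as Conjecture \ref{conj:idealconj} precisely because, as the authors say, their method can bound any two values $\ell(k)$ but cannot dispatch infinitely many inequalities simultaneously. So your proposal is not competing with a proof in the paper; it is a program for going beyond it. The setup of that program is correct and genuinely useful: passing to the limit in $f_k(m)$ does identify $\ell(2k)=P(N=k)$ with $N=\#\{j\ge 0: j\notin A+B\}$ for independent random $A,B\subseteq\mathbb{Z}_{\ge 0}$ (your consistency check $E[N]=3$ against Lemma \ref{thm:el} is right), and the decomposition $N=T+E$ is even cleaner than you suggest: conditioning on $(\min A,\min B)=(s,t-s)$ and translating shows $T$ and $E$ are \emph{exactly} independent, not merely asymptotically decoupled. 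In particular $\ell(2k)\ge P(T=k)P(E=0)\ge c(k+1)2^{-k}$ comes for free.

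The genuine gap is the one you flag yourself, and it is worse than a technicality: you have no proof, and not even strong evidence, that $G_E(z)=E[z^E]$ is analytic past $|z|=2$, which is what the entire singularity analysis hinges on. The cheap upper bound $P(E\ge e)\le\sum_{j\ge e}P(j\notin A+B)\le (3/4)^{e-1}$ only certifies radius $4/3$; the refined route via $P(J\subseteq\text{missing})\le 4^{-|J|}$ fails because the union over candidate missing sets $J$ of size $e$ with $\max J=O(e)$ contributes entropy $\binom{O(e)}{e}$, which overwhelms $4^{-e}$ unless one exploits the pair constraints for \emph{all} elements of $J$ jointly — and those events are positively correlated, so they do not factor. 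Worse, the numerics in Theorem \ref{thm:lreal} give $\ell(2k+2)/\ell(2k)\approx 0.59$--$0.62$ for $k=7,\dots,9$, noticeably above the value $\tfrac{k+1}{2k}\approx 0.55$ your asymptotic $\ell(2k)\sim Ck2^{-k}$ predicts; this is consistent with slow convergence, but equally consistent with $G_E$ having a singularity at some $\rho<2$, in which case the double pole at $z=2$ is not dominant and your claimed asymptotic is false (eventual monotonicity could still hold with limit ratio $1/\rho<1$, but by a different argument). So the proposal correctly reduces the conjecture's tail to a concrete analytic question about the overshoot $E$, but that question — the crux — is left open, and the finite window $5\le k\le k_0$ cannot even be specified without it. As written this is a plausible research plan, not a proof.
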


\FloatBarrier
\newpage
		
\appendix
\section{Distribution of $|S-S|$ when $n\le 36$}\label{appendix:distrSminusSthirtysix}

\begin{table}[!h]\tiny
  \caption{Number of $S\subseteq [n]$ with $|S-S|=k$. ($n\le 24$)}
  \resizebox{\textwidth}{!} {
  \begin{tabular}{c | *{25}{r}}
      \diagbox{k}{n} & 0 & 1 & 2 & 3 & 4 & 5 & 6 & 7 & 8 & 9 & 10 & 11 & 12 & 13 & 14 & 15 & 16 & 17 & 18 & 19 & 20 & 21 & 22 & 23 & 24 \\ \hline 0 & 1 & 1 & 1 & 1 & 1 & 1 & 1 & 1 & 1 & 1 & 1 & 1 & 1 & 1 & 1 & 1 & 1 & 1 & 1 & 1 & 1 & 1 & 1 & 1 & 1 \\ 1 & 0 & 1 & 2 & 3 & 4 & 5 & 6 & 7 & 8 & 9 & 10 & 11 & 12 & 13 & 14 & 15 & 16 & 17 & 18 & 19 & 20 & 21 & 22 & 23 & 24 \\ 3 & 0 & 0 & 1 & 3 & 6 & 10 & 15 & 21 & 28 & 36 & 45 & 55 & 66 & 78 & 91 & 105 & 120 & 136 & 153 & 171 & 190 & 210 & 231 & 253 & 276 \\ 5 & 0 & 0 & 0 & 1 & 2 & 4 & 6 & 9 & 12 & 16 & 20 & 25 & 30 & 36 & 42 & 49 & 56 & 64 & 72 & 81 & 90 & 100 & 110 & 121 & 132 \\ 7 & 0 & 0 & 0 & 0 & 3 & 8 & 17 & 31 & 51 & 77 & 112 & 155 & 208 & 272 & 348 & 436 & 539 & 656 & 789 & 939 & 1107 & 1293 & 1500 & 1727 & 1976 \\ 9 & 0 & 0 & 0 & 0 & 0 & 4 & 10 & 17 & 27 & 43 & 62 & 85 & 113 & 148 & 189 & 236 & 289 & 352 & 423 & 501 & 588 & 687 & 795 & 913 & 1042 \\ 11 & 0 & 0 & 0 & 0 & 0 & 0 & 9 & 25 & 47 & 77 & 113 & 170 & 237 & 319 & 413 & 531 & 666 & 825 & 1000 & 1206 & 1430 & 1691 & 1970 & 2289 & 2630 \\ 13 & 0 & 0 & 0 & 0 & 0 & 0 & 0 & 17 & 49 & 97 & 169 & 269 & 409 & 606 & 863 & 1195 & 1607 & 2115 & 2735 & 3492 & 4393 & 5450 & 6690 & 8130 & 9790 \\ 15 & 0 & 0 & 0 & 0 & 0 & 0 & 0 & 0 & 33 & 93 & 177 & 275 & 402 & 549 & 730 & 967 & 1238 & 1562 & 1932 & 2355 & 2829 & 3345 & 3946 & 4613 & 5343 \\ 17 & 0 & 0 & 0 & 0 & 0 & 0 & 0 & 0 & 0 & 63 & 187 & 377 & 629 & 973 & 1417 & 1978 & 2688 & 3628 & 4765 & 6151 & 7794 & 9781 & 12089 & 14774 & 17861 \\ 19 & 0 & 0 & 0 & 0 & 0 & 0 & 0 & 0 & 0 & 0 & 128 & 377 & 747 & 1228 & 1850 & 2642 & 3633 & 4849 & 6340 & 8278 & 10580 & 13381 & 16603 & 20474 & 24909 \\ 21 & 0 & 0 & 0 & 0 & 0 & 0 & 0 & 0 & 0 & 0 & 0 & 248 & 747 & 1509 & 2507 & 3770 & 5338 & 7271 & 9641 & 12469 & 15909 & 20315 & 25533 & 31893 & 39392 \\ 23 & 0 & 0 & 0 & 0 & 0 & 0 & 0 & 0 & 0 & 0 & 0 & 0 & 495 & 1472 & 2975 & 4999 & 7519 & 10654 & 14499 & 19129 & 24681 & 31221 & 38903 & 48354 & 59263 \\ 25 & 0 & 0 & 0 & 0 & 0 & 0 & 0 & 0 & 0 & 0 & 0 & 0 & 0 & 988 & 2975 & 6022 & 10104 & 15278 & 21596 & 29249 & 38430 & 49408 & 62377 & 77572 & 95318 \\ 27 & 0 & 0 & 0 & 0 & 0 & 0 & 0 & 0 & 0 & 0 & 0 & 0 & 0 & 0 & 1969 & 5911 & 11985 & 20192 & 30501 & 43062 & 58148 & 76121 & 97667 & 123155 & 153424 \\ 29 & 0 & 0 & 0 & 0 & 0 & 0 & 0 & 0 & 0 & 0 & 0 & 0 & 0 & 0 & 0 & 3911 & 11880 & 24103 & 40524 & 61350 & 86236 & 115893 & 150319 & 190510 & 236824 \\ 31 & 0 & 0 & 0 & 0 & 0 & 0 & 0 & 0 & 0 & 0 & 0 & 0 & 0 & 0 & 0 & 0 & 7857 & 23734 & 48377 & 81542 & 123470 & 174352 & 234160 & 304245 & 385858 \\ 33 & 0 & 0 & 0 & 0 & 0 & 0 & 0 & 0 & 0 & 0 & 0 & 0 & 0 & 0 & 0 & 0 & 0 & 15635 & 47474 & 96676 & 162994 & 246765 & 347050 & 465537 & 602109 \\ 35 & 0 & 0 & 0 & 0 & 0 & 0 & 0 & 0 & 0 & 0 & 0 & 0 & 0 & 0 & 0 & 0 & 0 & 0 & 31304 & 94885 & 193562 & 326913 & 494449 & 696108 & 931109 \\ 37 & 0 & 0 & 0 & 0 & 0 & 0 & 0 & 0 & 0 & 0 & 0 & 0 & 0 & 0 & 0 & 0 & 0 & 0 & 0 & 62732 & 190623 & 388606 & 656644 & 993569 & 1396647 \\ 39 & 0 & 0 & 0 & 0 & 0 & 0 & 0 & 0 & 0 & 0 & 0 & 0 & 0 & 0 & 0 & 0 & 0 & 0 & 0 & 0 & 125501 & 380805 & 776640 & 1312446 & 1985532 \\ 41 & 0 & 0 & 0 & 0 & 0 & 0 & 0 & 0 & 0 & 0 & 0 & 0 & 0 & 0 & 0 & 0 & 0 & 0 & 0 & 0 & 0 & 250793 & 763402 & 1557467 & 2633237 \\ 43 & 0 & 0 & 0 & 0 & 0 & 0 & 0 & 0 & 0 & 0 & 0 & 0 & 0 & 0 & 0 & 0 & 0 & 0 & 0 & 0 & 0 & 0 & 503203 & 1528095 & 3117611 \\ 45 & 0 & 0 & 0 & 0 & 0 & 0 & 0 & 0 & 0 & 0 & 0 & 0 & 0 & 0 & 0 & 0 & 0 & 0 & 0 & 0 & 0 & 0 & 0 & 1006339 & 3061916 \\ 47 & 0 & 0 & 0 & 0 & 0 & 0 & 0 & 0 & 0 & 0 & 0 & 0 & 0 & 0 & 0 & 0 & 0 & 0 & 0 & 0 & 0 & 0 & 0 & 0 & 2014992 \\ 49 & 0 & 0 & 0 & 0 & 0 & 0 & 0 & 0 & 0 & 0 & 0 & 0 & 0 & 0 & 0 & 0 & 0 & 0 & 0 & 0 & 0 & 0 & 0 & 0 & 0 \\
  \end{tabular}
  }
\end{table}
\begin{table}[!h]\tiny
  \caption{Number of $S\subseteq [n]$ with $|S-S|=k$. ($24\le n\le 36$)}
  \resizebox{\textwidth}{!} {
  \begin{tabular}{c | *{13}{r}}
      \diagbox{k}{n} & 24 & 25 & 26 & 27 & 28 & 29 & 30 & 31 & 32 & 33 & 34 & 35 & 36 \\ \hline 0 & 1 & 1 & 1 & 1 & 1 & 1 & 1 & 1 & 1 & 1 & 1 & 1 & 1 \\ 1 & 24 & 25 & 26 & 27 & 28 & 29 & 30 & 31 & 32 & 33 & 34 & 35 & 36 \\ 3 & 276 & 300 & 325 & 351 & 378 & 406 & 435 & 465 & 496 & 528 & 561 & 595 & 630 \\ 5 & 132 & 144 & 156 & 169 & 182 & 196 & 210 & 225 & 240 & 256 & 272 & 289 & 306 \\ 7 & 1976 & 2248 & 2544 & 2864 & 3211 & 3584 & 3985 & 4415 & 4875 & 5365 & 5888 & 6443 & 7032 \\ 9 & 1042 & 1184 & 1338 & 1504 & 1682 & 1876 & 2084 & 2305 & 2541 & 2795 & 3064 & 3349 & 3651 \\ 11 & 2630 & 3010 & 3419 & 3876 & 4357 & 4886 & 5443 & 6060 & 6707 & 7410 & 8143 & 8940 & 9776 \\ 13 & 9790 & 11699 & 13868 & 16325 & 19094 & 22202 & 25674 & 29543 & 33832 & 38569 & 43786 & 49515 & 55787 \\ 15 & 5343 & 6158 & 7029 & 7980 & 9024 & 10164 & 11384 & 12696 & 14093 & 15597 & 17216 & 18941 & 20767 \\ 17 & 17861 & 21464 & 25554 & 30192 & 35439 & 41365 & 47972 & 55334 & 63485 & 72583 & 82597 & 93598 & 105615 \\ 19 & 24909 & 30034 & 35835 & 42560 & 50164 & 58778 & 68336 & 79218 & 91199 & 104572 & 119214 & 135569 & 153328 \\ 21 & 39392 & 48297 & 58729 & 70921 & 85023 & 101393 & 120236 & 141992 & 166842 & 195124 & 227418 & 263837 & 304894 \\ 23 & 59263 & 72166 & 86779 & 103803 & 122773 & 144495 & 168711 & 195948 & 226062 & 259777 & 297046 & 338522 & 383708 \\ 25 & 95318 & 116803 & 141545 & 170669 & 203518 & 241453 & 283954 & 332047 & 385486 & 445578 & 511668 & 585268 & 666132 \\ 27 & 153424 & 188936 & 230785 & 281634 & 340918 & 411385 & 492735 & 587687 & 696368 & 821738 & 964188 & 1126614 & 1309990 \\ 29 & 236824 & 290286 & 351743 & 422400 & 502848 & 598252 & 705828 & 831558 & 972438 & 1134483 & 1314383 & 1519559 & 1747229 \\ 31 & 385858 & 480260 & 589088 & 713474 & 855957 & 1018020 & 1202962 & 1419676 & 1664732 & 1947773 & 2265195 & 2627654 & 3032028 \\ 33 & 602109 & 759570 & 939048 & 1145157 & 1379205 & 1646202 & 1948206 & 2289594 & 2673659 & 3121284 & 3619723 & 4191609 & 4824889 \\ 35 & 931109 & 1202343 & 1512270 & 1865592 & 2266137 & 2720935 & 3236533 & 3821295 & 4483176 & 5231412 & 6075752 & 7058965 & 8161491 \\ 37 & 1396647 & 1867806 & 2404100 & 3013664 & 3697776 & 4468556 & 5330593 & 6293553 & 7368022 & 8567388 & 9903780 & 11391366 & 13047575 \\ 39 & 1985532 & 2792117 & 3726584 & 4795360 & 5994044 & 7342144 & 8845276 & 10520512 & 12382684 & 14456863 & 16757210 & 19313503 & 22151419 \\ 41 & 2633237 & 3984017 & 5596451 & 7469425 & 9586795 & 11966365 & 14608625 & 17543417 & 20782662 & 24369445 & 28318130 & 32680465 & 37482058 \\ 43 & 3117611 & 5270104 & 7970998 & 11195574 & 14913983 & 19131301 & 23822819 & 29022146 & 34739876 & 41039669 & 47936336 & 55509344 & 63800433 \\ 45 & 3061916 & 6244117 & 10557091 & 15968677 & 22417023 & 29862931 & 38239392 & 47566626 & 57804101 & 69047026 & 81288502 & 94666428 & 109216351 \\ 47 & 2014992 & 6125358 & 12494664 & 21122722 & 31935586 & 44822674 & 59651353 & 76346946 & 94783970 & 115036473 & 137031262 & 160950680 & 186816887 \\ 49 & 0 & 4035985 & 12278446 & 25038586 & 42321005 & 63983506 & 89749444 & 119386846 & 152607226 & 189351319 & 229343035 & 272803379 & 319629353 \\ 51 & 0 & 0 & 8080448 & 24564954 & 50090752 & 84658919 & 127967673 & 179465499 & 238552257 & 304816636 & 377630128 & 456991110 & 542473471 \\ 53 & 0 & 0 & 0 & 16169267 & 49200792 & 100303312 & 169496641 & 256144840 & 359073831 & 477185749 & 609113912 & 754212597 & 911317415 \\ 55 & 0 & 0 & 0 & 0 & 32397761 & 98478615 & 200765677 & 339187677 & 512453496 & 718291220 & 953949620 & 1217261287 & 1505590283 \\ 57 & 0 & 0 & 0 & 0 & 0 & 64826967 & 197164774 & 401837351 & 678805584 & 1025433250 & 1436715877 & 1907636501 & 2432498687 \\ 59 & 0 & 0 & 0 & 0 & 0 & 0 & 129774838 & 394536002 & 804070333 & 1358091161 & 2051059855 & 2873264810 & 3813305230 \\ 61 & 0 & 0 & 0 & 0 & 0 & 0 & 0 & 259822143 & 789993459 & 1609586119 & 2717986051 & 4104228068 & 5747795503 \\ 63 & 0 & 0 & 0 & 0 & 0 & 0 & 0 & 0 & 520063531 & 1580640910 & 3220331421 & 5437313809 & 8208838614 \\ 65 & 0 & 0 & 0 & 0 & 0 & 0 & 0 & 0 & 0 & 1040616486 & 3163602123 & 6444236200 & 10879185718 \\ 67 & 0 & 0 & 0 & 0 & 0 & 0 & 0 & 0 & 0 & 0 & 2083345793 & 6330608624 & 12894355828 \\ 69 & 0 & 0 & 0 & 0 & 0 & 0 & 0 & 0 & 0 & 0 & 0 & 4168640894 & 12668987317 \\ 71 & 0 & 0 & 0 & 0 & 0 & 0 & 0 & 0 & 0 & 0 & 0 & 0 & 8342197304 \\ 73 & 0 & 0 & 0 & 0 & 0 & 0 & 0 & 0 & 0 & 0 & 0 & 0 & 0 \\
  \end{tabular}
  }
\end{table}

\begin{rek}\label{thm:TT}
Denoting the table by $T$, $T_{n,k}/2^n = \pnh k = \pnm{2n-1-k}$.
\end{rek}


\begin{que}\label{que:311}
Observe that when $n=3,11,12,14$, $\pnm2=\pnm4$. Such frequent repetition of large numbers doesn't look so random. Is there any reason behind it? Will it happen again?
\end{que}

\FloatBarrier
\newpage
\section{Code for Estimating $j(2k)$}\label{app:code}

\lstinputlisting[language=C,basicstyle=\ttfamily, breaklines, prebreak={\textbackslash}, columns=fullflexible,showstringspaces=false,keepspaces=true,frame=single]{calcCode.c}

\begin{rek}
The algorithm is $\Omega(4^m m^2)$. When $m=23$, it runs for 92.73 hours on our laptop. In fact, even when $m=18$, which takes only 3 minutes to run, the results could already establish $\ell(2) - \ell(4)<0$, and hence Theorem \ref{thm:obs3}, although it's not strong enough to show that $\ell(0)>\ell(8)$. The reader is welcome to confirm our calculations or achieve better bounds.
\end{rek}


\bigskip
\end{document}